\documentclass[12pt]{amsart}
\usepackage{amsmath,amssymb,amsfonts}
\usepackage{mathtools}
\usepackage{esvect}
\usepackage{tikz}
\usepackage[shortlabels]{enumitem}
\setlist{leftmargin=*}

\usepackage{amsfonts}
\usepackage{todonotes}

\usepackage[colorlinks=true, linkcolor=blue]{hyperref}

\usepackage[sorted]{amsrefs}

\newtheorem{thm}{Theorem}[section]
\newtheorem*{thm*}{Theorem}
\newtheorem{cor}[thm]{Corollary}
\newtheorem{lem}[thm]{Lemma}

\newtheorem{fact}[thm]{Fact}

\theoremstyle{definition}
\newtheorem{defn}[thm]{Definition}
\theoremstyle{remark}

\newtheorem{rem}[thm]{Remark}

 \numberwithin{equation}{section}
    {\medskip\begingroup\leftskip 0.5cm\rightskip 0.5cm\noindent\begin{small}{\bf Remark.}}
    {\end{small}\par\endgroup}
{\begin{list}{$\bullet$}
 {\settowidth{\labelwidth}{\textsf{$\bullet$}} \setlength{\leftmargin}{10pt}}}
{\end{list}}

\newcounter{ssample}[section]

{\color{Bittersweet} \noindent Example \refstepcounter{ssample}\hbox{\bf \arabic{section}.\arabic{ssample}.}}
{}

\newcounter{insertcount}

    {\color{blue} \medskip\begingroup\noindent\begin{small}{\color{blue} \stepcounter{insertcount}
          {
            \bf Insert \arabic{insertcount}. #1.}
            \addcontentsline{toc}{subsection}{{\ \ \small  Insert \arabic{insertcount}: #1}}
               \leavevmode  }
           }
    {\end{small}\par\endgroup}

\newcommand{\mrmk}[1]
{{\tiny$^{\spadesuit}$}\marginpar{\fbox{\footnotesize #1}}}
\def\strutdepth{\dp\strutbox}%
\def\marginalnote#1{\strut\vadjust{\kern-\strutdepth\specialnote{#1}}}%
\def\specialnote#1{\vtop to \strutdepth{\baselineskip%
\strutdepth\vss\llap{\hbox{\scriptsize \bf #1}}\null}}%



\newcommand{\RR}{\mathbb{R}}



\def\vc{\operatorname{vc}}


\def\CF{\mathcal F}

\newcommand{\NN}{\mathbb N}


\newcommand{\CI}{\mathcal I}

\reversemarginpar

\def\CL{\mathcal{L}}



\def\CM{\mathcal{M}}


\newcommand*\bbar[1]{%
  \hbox{%
    \vbox{%
      \hrule height 0.5pt 
      \kern0.5ex
      \hbox{%
        \kern-0.1em
        \ensuremath{#1}%
        \kern-0.1em
      }%
    }%
  }%
}

\gdef\CT{{\mathcal{T}}}
\gdef\ttimes{{\times}}
\gdef\ddotsb{{\dotsb}}
\title[Cutting lemma in distal structures]{Cutting lemma and Zarankiewicz's problem in distal structures}
 
\author{Artem Chernikov} \address{Department of Mathematics, 
University of California Los Angeles,
Los Angeles, CA 90095-1555} \email{chernikov@math.ucla.edu}

\author{David Galvin} \address{Department of Mathematics, University of Notre Dame, Notre Dame,
  IN 46556} \email{dgalvin1@nd.edu}

\author{Sergei Starchenko} \address{Department of Mathematics, University of Notre Dame, Notre Dame,
  IN 46556} \email{Starchenko.1@nd.edu}

\begin{document}

{\abstract We establish a cutting lemma for definable families of sets in distal structures, as well as the optimality of the distal cell decomposition for definable families of sets on the plane in o-minimal expansions of fields. Using it, we generalize the results in \cite{zaran} on the semialgebraic planar Zarankiewicz problem to arbitrary o-minimal structures, in particular obtaining an o-minimal generalization of the Szemer\'edi-Trotter theorem.}

\maketitle

\section{Introduction}

The so called \emph{cutting lemma} is a very useful combinatorial partition tool with numerous applications in computational and incidence geometry and related areas (see e.g. \cite[Sections 4.5, 6.5]{mbook} or \cite{cuttings} for a survey). In its simplest form it can be stated as follows (see e.g. \cite[Lemma 4.5.3]{mbook}).

\begin{fact}
For every set $L$ of $n$ lines in the real plane and every $1< r < n$ there exists a $\frac{1}{r}$-cutting for $L$ of size $O(r^2)$. That is, there is a subdivision	of the plane into generalized triangles (i.e. intersections of three half-planes)
 $\Delta_1, \ldots, \Delta_t$ so that the interior of each $\Delta_i$ is intersected by at most $\frac{n}{r}$ lines in $L$, and we have $t \leq C r^2$ for a certain constant $C$ independent of $n$ and $r$.\end{fact}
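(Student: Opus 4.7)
The plan is to prove this via random sampling combined with the $\varepsilon$-net theorem, which is the standard route to cutting lemmas in combinatorial geometry. Consider the set system $(L, \mathcal{R})$ whose ranges are, for each generalized triangle $\Delta$ in the plane, the set of lines of $L$ whose interior meets $\Delta$. Since a generalized triangle is the intersection of three half-planes, the family of ranges is definable in the structure $(\RR, <, +, \cdot)$ with uniformly bounded complexity, so this set system has finite VC dimension.

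Next, for $\varepsilon := 1/r$, invoke the $\varepsilon$-net theorem to obtain a sample $R \subseteq L$ with $|R| = O(r \log r)$ such that every generalized triangle crossed by more than $n/r$ lines of $L$ is crossed by at least one line of $R$. Then form the arrangement $\mathcal{A}(R)$ of the lines in $R$ and triangulate each of its faces into generalized triangles $\Delta_1, \ldots, \Delta_t$. Since $|R| = O(r \log r)$, standard bounds on line arrangements give $t = O(r^2 \log^2 r)$. By construction the interior of each $\Delta_i$ is disjoint from every line of $R$, so by the $\varepsilon$-net property each $\Delta_i$ is crossed by at most $n/r$ lines of $L$. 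This already yields a weak $\tfrac{1}{r}$-cutting of size $O(r^2 \log^2 r)$.

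To get the sharp bound $t = O(r^2)$, I would apply a two-level refinement in the spirit of Chazelle--Friedman. Starting from a slightly coarser initial sample, iterate: for each triangle $\Delta_i$ whose interior is still crossed by more than $n/r$ lines, apply the sampling argument again to the restricted collection $L_i = \{\ell \in L : \ell \cap \operatorname{int}(\Delta_i) \neq \emptyset\}$ with parameter $r_i := r \cdot |L_i|/n$, and subdivide $\Delta_i$ accordingly. A Clarkson--Shor type charging argument, which bounds the expected total complexity of the refined arrangement in terms of the expected complexity of the sample arrangement, then shows that the sum of all subdivisions telescopes to $O(r^2)$.

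The main obstacle is the removal of the $\log^2 r$ factor in the last step. The initial $\varepsilon$-net construction is essentially soft and relies only on bounded VC dimension; the sharp bound, however, requires the Clarkson--Shor moment inequality (or equivalently Haussler's packing lemma) and careful bookkeeping of how many lines cross each refined piece. In fact this is precisely why the distal cutting lemma of the paper is nontrivial: the input to the refinement step must still live in a combinatorially well-behaved family, which in the classical case is automatic but in the general distal setting requires the distal cell decomposition machinery.
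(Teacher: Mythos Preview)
The paper does not prove this Fact; it is quoted from Matou\v{s}ek's textbook as motivating background, so there is no ``paper's own proof'' to compare against directly.

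That said, your outline is the standard route and is essentially correct, and it is also the architecture the paper uses for its generalization (Theorem~\ref{lem-r_cut}): a first pass produces a suboptimal cutting of size $O(r^d\log^d r)$ (Lemma~\ref{lem-subopt_cut}), and a second level of refinement removes the logarithmic factor. Two points of comparison are worth noting.
\begin{itemize}
\item For the first pass the paper does not go through the $\varepsilon$-net theorem but samples directly with Bernoulli$(r/n)$ trials and controls the outcome via an exponential tail bound (Lemma~\ref{lem-tail}). Your $\varepsilon$-net route is an equally valid way to reach the suboptimal bound.
\item Your description of the second step is slightly off. One does not start from a ``slightly coarser'' sample and iterate; rather, one samples at rate $r/n$ once, obtaining $O(r^2)$ cells some of which are crossed by $t_\Delta n/r$ lines with $t_\Delta>1$, and then applies the suboptimal lemma \emph{inside each such cell} with parameter $t_\Delta$, producing $O(t_\Delta^2\log^2 t_\Delta)$ subcells. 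The total $\sum_\Delta t_\Delta^2\log^2 t_\Delta$ is controlled in expectation by the tail estimate $\mathbf{E}\,|\{\Delta: t_\Delta\geq t\}|=O(2^{-t}r^2)$. Nothing telescopes; it is a moment computation.
\end{itemize}
Your closing remark that the distal cell decomposition substitutes for ``general position'' is on target and corresponds to the role of Lemma~\ref{lem-set_system_corr} in the paper's argument.
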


This result provides a method to analyze intersection patterns in families of lines, and it has many generalizations to higher dimensional sets and/or to families of sets of more complicated shape than lines, for example for families of algebraic or semialgebraic curves of bounded complexity \cite{chaz}. The proofs of these generalizations  typically combine some kind of geometric ``cell decomposition'' result with the so-called random sampling technique of Clarkson and Shor \cite{13}. 

The aim of this article is to establish a general version of the cutting lemma for definable (in the sense of first-order logic) families of sets in a certain model-theoretically tame class of structures (namely, for distal structures --- see Section \ref{sec: prelims and distality} for the definition), as well as to apply it to generalize some of the results in the area from the semialgebraic context  to arbitrary o-minimal structures. This work can be viewed as a continuation and refinement of the work started in \cite{distal}, where the connection of model-theoretic distality with a weak form of the cutting lemma was discovered (we don't assume familiarity with that paper, but recommend its introduction for an expanded discussion of the model theoretic preliminaries).  We believe that distal structures provide the most general natural setting for investigating questions in ``generalized incidence combinatorics''.

Let us describe the main results of the paper. Our first theorem establishes a cutting lemma for a definable family of sets in a distal structure, with the bound corresponding to the bound on the size of its distal cell decomposition. This can be viewed as  a generalized form of Matou\v{s}ek's axiomatic treatment of Clarkson's random sampling method discussed in \cite[Section 6.5]{mbook}.
The proof relies in particular on Lemma \ref{lem-set_system_corr} on correlations in set-systems to deal with the lack of the corresponding notion of ``being in a general position''.
\begin{thm*}(Theorem \ref{lem-r_cut}, Distal cutting lemma) Let $\mathcal{M}$ be a first-order structure. Let $\varphi(x;y)$ be a formula admitting a distal cell decomposition $\CT$ (given by a finite set of formulas $\Psi(x; \bar{y})$ --- see Definition \ref{def: def cell decomp}) with 
$|\CT(S)| = O(|S|^d)$  (i.e. for some constant $C\in \RR$, for any \textbf{non-empty} 
finite $S\subseteq M^{|y|}$ we have $|\CT(S)| \leq C|S|^d$). 

Then for any finite $H \subseteq M^{|y|}$ of size $n$ and any real $r$ satisfying $1 < r < n$,  there are subsets $X_1, \ldots, X_t$ of $M^{|x|}$  covering $M^{|x|}$ with
$$
t \leq Cr^d
$$
for some constant $C = C(\varphi)$ (and independent of $H$, $r$ and $n$), and with each $X_i$ crossed by at most $n/r$ of the formulas $\{\varphi(x;a): a \in H\}.$ 

Moreover, each $X_i$ is the intersection of at most two sets $\Psi$-definable over $H$ (see Definition \ref{def: definable, crossing}).

\end{thm*}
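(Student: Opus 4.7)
My plan is to adapt the classical Clarkson--Shor random sampling argument to the distal setting, using the distal cell decomposition $\CT$ in place of a geometric one. First, draw a random sample $R \subseteq H$ of size $s = \Theta(r)$ uniformly without replacement and form the cell decomposition $\CT(R)$; by hypothesis this produces at most $C|R|^d = O(r^d)$ cells, each $\Psi$-definable over $R \subseteq H$. By the defining property of a distal cell decomposition, no formula $\varphi(x;a)$ with $a \in R$ crosses any $\Delta \in \CT(R)$, so a cell is crossed only by formulas with parameters in $H \setminus R$.

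Call $\Delta \in \CT(R)$ \emph{light} if at most $n/r$ formulas $\varphi(x;a)$, $a \in H$, cross it, and \emph{heavy} otherwise. Light cells are kept. For each heavy cell $\Delta$, with $H_\Delta \subseteq H$ the parameters whose formulas cross $\Delta$, refine $\Delta$ by intersecting it with the cells of a secondary decomposition $\CT(H_\Delta)$ computed at a scale chosen so that each refined piece is crossed by at most $n/r$ formulas. Each resulting $X_i$ has the form $\Delta \cap \Delta'$ with $\Delta$ and $\Delta'$ both $\Psi$-definable over $H$, which yields the ``moreover'' clause.

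The counting is the heart of the argument. The light contribution is at most $|\CT(R)| = O(r^d)$. For the heavy contribution, the key step is a Clarkson--Shor style expected-value bound of the form $\mathbb{E}\bigl[\sum_{\Delta \text{ heavy}} |H_\Delta|^d\bigr] = O(n^d)$, so that after dividing by $(n/r)^d$ one obtains $O(r^d)$ refined pieces in total. Selecting an $R$ that meets the expectation yields the desired deterministic bound $t \leq Cr^d$.

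The main obstacle, and the reason Lemma \ref{lem-set_system_corr} on correlations in set-systems is needed, is the absence of a ``general position'' hypothesis. In the classical argument each cell is determined by a bounded number of objects in the sample with essentially independent sampling events, so the probability a cell $\Delta$ ``survives'' in $\CT(R)$ is approximately $(1 - k(\Delta)/n)^s$. For an arbitrary definable family these events need not be independent, and the correlation lemma provides the quantitative substitute which ensures that the survival probability of a cell with many crossings decays sufficiently fast for the expected sum above to be of the right order. Once this bound is in hand, the rest of the argument is a clean bookkeeping of the light and refined pieces.
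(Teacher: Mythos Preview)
Your outline follows the same two-level random-sampling scheme as the paper, and you correctly identify that the correlation lemma (Lemma~\ref{lem-set_system_corr}) is what substitutes for general position. However, the secondary refinement step, as you describe it, is circular, and the counting you sketch does not close.

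You write: ``refine $\Delta$ by intersecting it with the cells of a secondary decomposition $\CT(H_\Delta)$ computed at a scale chosen so that each refined piece is crossed by at most $n/r$ formulas,'' and then that the Clarkson--Shor bound $\mathbb{E}\bigl[\sum_{\Delta}|H_\Delta|^d\bigr]=O(n^d)$, ``after dividing by $(n/r)^d$,'' yields $O(r^d)$ pieces. But producing, inside a heavy cell with $|H_\Delta|=t_\Delta n/r$, a refinement with only $O(t_\Delta^d)$ pieces each crossed $\le n/r$ times is precisely a $1/t_\Delta$-cutting for $H_\Delta$ --- the very theorem you are proving. If instead you take the full decomposition $\CT(H_\Delta)$, each piece is crossed by nothing, but you pay $O(|H_\Delta|^d)$ pieces per cell and the total is $O(n^d)$, not $O(r^d)$. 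So neither reading works.

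The paper breaks this circularity by a bootstrap. First it proves a \emph{Tail Bound Lemma}: for Bernoulli sampling at rate $r/n$, $\mathbb{E}\bigl[|\CT(S)_{\ge t}|\bigr]\le C\,2^{-t}r^d$ (this is where Lemma~\ref{lem-set_system_corr} enters, comparing sampling at rates $p$ and $p/t$). From this one extracts a \emph{Suboptimal Cutting Lemma} of size $O(r^d\log^d r)$ by sampling at the inflated rate $r' = \Theta(r\log r)$. Only then is the two-level argument run: for a heavy cell with excess $t_\Delta$ one applies the \emph{suboptimal} lemma to $\CI_H(\Delta)$, at cost $O(t_\Delta^d\log^d t_\Delta)$ pieces. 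The expected total is then $\sum_i 2^{2d(i+1)}\,\mathbb{E}[|\CT(S)_{\ge 2^i}|] = O(r^d)$ by the exponential tail; note this requires moments beyond the $d$-th, so your single bound $\mathbb{E}[\sum t_\Delta^d]=O(r^d)$ would not absorb the $\log^d$ factor even if the circularity were resolved. One further technical point: the relevant set of ``bad'' parameters is $\CI_H(\Delta)$ from Definition~\ref{def: def cell decomp}, which can be strictly larger than your $H_\Delta$; the membership criterion $\Delta\in\CT(S)\iff \CI_H(\Delta)\cap S=\emptyset$ (condition \textbf{(C3)}) is what makes the probability computation go through.
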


While every formula in a distal structure admits a distal cell decomposition (see Fact \ref{fac: char of distality}), establishing optimal bounds in dimension higher than $1$ is non-trivial. In our second theorem, we demonstrate that formulas in o-minimal structures admit distal cell decompositions of optimal size ``on the plane''.

\begin{thm*}(Theorem \ref{thm:cell-dec})
Let $\CM$ be an o-minimal expansion of a real closed field. 
For any  formula $\varphi(x;y)$ with $|x|=2$ there is a distal cell decomposition $\CT$ with 
$|\CT(S)| = O(|S|^2)$.
\end{thm*}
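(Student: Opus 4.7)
The plan is to produce the distal cell decomposition as the (vertical) planar arrangement induced by the boundaries of the fibers $\varphi(x;a)$, $a \in S$. Once this arrangement is set up, the $O(|S|^2)$ bound will follow from a classical Euler-formula count for arrangements of curves with uniformly bounded pairwise intersections, and the description of the cells via a finite family $\Psi$ will come from a standard vertical decomposition.

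\medskip

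\emph{Step 1: uniformly definable boundary curves.}  First I would apply uniform $o$-minimal cell decomposition to the family $\{\varphi(x_1,x_2;a)\}_a$ in $M^2$. This yields a constant $K = K(\varphi)$ and finitely many definable (without parameters) partial functions $f_j(x_1;y)$ and definable functions $c_\ell(y)$ such that, for every $a$, the set $\varphi(x;a)$ is a union of at most $K$ cells whose topological boundary is contained in the graphs $\{x_2=f_j(x_1;a)\}$ and the vertical segments $\{x_1=c_\ell(a)\}$.

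\medskip

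\emph{Step 2: the arrangement and the face count.}  Given finite $S$, let $\mathcal{A}(S)$ consist of the at most $K|S|$ boundary curves above, over all $a \in S$. By uniform finiteness in $o$-minimal structures, there is $N = N(\varphi)$ such that any two curves in the defining family intersect in at most $N$ points. View $\mathcal{A}(S)$ as a planar graph: its vertices are the pairwise intersections together with the endpoints and vertical tangencies of the curves, and its edges are the maximal subarcs between consecutive vertices. Then the vertex count is $O(|S|^2)$ and the edge count is $O(|S|^2)$ (each of the $O(|S|)$ curves contributes $O(|S|)$ edges). By Euler's formula, the number of connected components of $M^2 \setminus \mathcal{A}(S)$ is $O(|S|^2)$ as well.

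\medskip

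\emph{Step 3: uniform definability of the cells.}  To exhibit the cells as instances of a fixed finite family $\Psi(x;\bar y)$ evaluated on bounded-length tuples from $S$, I would pass to the \emph{vertical decomposition}: through every vertex of $\mathcal{A}(S)$, drop a vertical segment upward and downward until it first meets another curve of $\mathcal{A}(S)$. The refined decomposition still has $O(|S|^2)$ cells, and each $2$-cell is a generalized trapezoid
\[
\{(x_1,x_2) : c < x_1 < c',\ f(x_1) < x_2 < g(x_1)\},
\]
where $c, c'$ are critical $x_1$-values and $f, g$ are functions in the family $\{f_j\}$. Each of $c, c'$ is definable from at most two parameters in $S$ (the endpoint/extremum of a single curve, or the intersection of two curves), and each of $f, g$ is definable from a single parameter in $S$. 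Hence every $2$-cell has the form $\psi(x;\bar a)$ for some tuple $\bar a$ from $S$ of length bounded by an absolute constant, with $\psi$ drawn from a finite set $\Psi$ indexed by the combinatorial \emph{type} of the cell (which $f_j$'s and which intersection/endpoint types play which roles). Cells of dimension $1$ and $0$ are handled identically. Since each cell avoids $\partial\varphi(x;a)$ for every $a \in S$, the truth value of $\varphi(x;a)$ is constant on each cell, giving the required distal cell decomposition.

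\medskip

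\emph{The principal obstacle} is Step 3: the naive description ``the curve of $\mathcal{A}(S)$ immediately below $x_2$ at $x_1$'' involves all $|S|$ parameters and violates the bounded-parameter condition. The vertical decomposition circumvents this by building the two bounding curves of each trapezoid into its very definition, so that the defining formula mentions only those two curves and the two vertical edges, using a bounded number of parameters from $S$. Verifying that only finitely many combinatorial types $\psi$ arise across all choices of $S$, independently of $|S|$, ultimately reduces to the finiteness of the families $\{f_j\}, \{c_\ell\}$ from Step 1 and the uniform intersection bound $N$, but the bookkeeping to make this explicit is the technical heart of the argument.
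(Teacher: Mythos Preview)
Your approach via vertical decomposition of the arrangement of boundary curves is exactly the paper's. Two points separate your outline from a complete proof.

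First, the Euler-formula count in Step~2 is a detour and not immediately available over an arbitrary real closed field. The paper bypasses it by counting the trapezoidal $2$-cells of the vertical decomposition directly: each such cell is determined by its bottom bounding curve together with its left vertical edge, and the latter is either $-\infty$, one of the $O(|S|)$ vertical lines $x_2 = u_i(s)$, the abscissa of one of the $O(|S|^2)$ proper intersection points on the bottom curve, or one of the $O(|S|^2)$ extra vertical segments. This yields $O(|S|^2)$ with no topological input.

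Second, and more substantively, your Step~3 establishes only \emph{weak} definability: each cell of $\CT(S)$ is an instance of some $\psi\in\Psi$ over a bounded-length tuple from $S$. A distal cell decomposition (Definition~\ref{def: def cell decomp}) demands more: a uniformly definable $\CI(\Delta)\subseteq M^{|y|}$ with
\[
\CT(S)=\{\Delta\in\Psi(S):\CI(\Delta)\cap S=\emptyset\}.
\]
Taking $\CI(\Delta)$ to be the set of $s$ such that $\Phi(x;s)$ crosses $\Delta$ is \emph{not} sufficient. A trapezoid $\Delta$ can avoid being crossed by every $\Phi(x;s)$, $s\in S$, yet fail to be a cell of the vertical decomposition of $S$: if some curve from $S$ meets the bottom bounding curve of $\Delta$ tangentially from below at an interior abscissa, then no formula in $\Phi(x;s)$ crosses $\Delta$, but the extra vertical segment dropped through that contact point does split $\Delta$. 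The paper's remedy is to enlarge $\CI(\Delta)$ to include every $s$ whose curve properly intersects the top or bottom bounding curve of $\Delta$ anywhere within its horizontal range (conditions (C2)--(C3) in the paper's construction of $\CT_2$). This is the genuinely delicate step; the ``bookkeeping'' you flag about finitely many combinatorial types of $\psi$ is routine by comparison, and in particular does not by itself deliver the required $\CI$.
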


In our proof, we show that a version of the vertical cell decomposition can be generalized to arbitrary o-minimal theories. This gives an optimal bound for subsets of $M^2$, but determining the exact bounds for distal cell decompositions in higher dimensions remains open, even in the semialgebraic case.

Finally, in Section \ref{sec: Zarank} we apply these two theorems to generalize the results in \cite{zaran} on the semialgebraic Zarankiewicz problem to arbitrary o-minimal structures, in the planar case (our result is more general and applies to arbitrary definable families admitting a quadratic distal cell decomposition, see Section \ref{sec: Zarank} for the precise statements).

\begin{thm*} (Theorem \ref{thm: everything o-min})
Let $\CM$ be an o-minimal expansion of a real closed field and let $E(x,y) \subseteq M^2 \times M^d$ be a definable relation, given by an instance of some formula $\theta(x,y;z) \in \mathcal{L}$ using some parameters from $M^{|z|}$. 

\begin{enumerate}
\item For every $k \in \mathbb{N}$ there is a constant $\alpha = \alpha(\theta,k) \in \mathbb{R}$ such that for any finite $P \subseteq M^2, Q \subseteq M^d$, $|P|=m, |Q| = n$, if $E \cap (P \times Q)$ does not contain a copy of $K_{k,k}$ (the complete bipartite graph with two parts of size $k$), then we have 
	$$ |E(P,Q)| \leq \alpha \left( m^{\frac{d}{2d-1}} n^{\frac{2d-2}{2d-1}} + m + n \right).$$

	\item There is some $k' \in \mathbb{N}$ and formulas $\varphi(x,v), \psi(y,w)$, all depending only on $\theta$, such that if $E$ contains a copy of $K_{k',k'}$, then there are some parameters $b \in M^v, c \in M^w$ such that both $\varphi(M,b)$ and $ \psi(M,c)$ are infinite and $\varphi(M,b) \times \psi(M,c) \subseteq E$.
\end{enumerate}
\end{thm*}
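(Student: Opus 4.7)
The proof splits into two parts of different flavors: part (1) is a Clarkson--Shor divide-and-conquer incidence count powered by the paper's cutting and cell decomposition theorems; part (2) is a ``definable Zarankiewicz'' converse whose core is model-theoretic.

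For part (1), the plan runs as follows. Since $|x|=2$ and $\CM$ is an $o$-minimal expansion of a real closed field, Theorem~\ref{thm:cell-dec} gives a distal cell decomposition of $E(x;y)$ with $|\CT(S)|=O(|S|^2)$. Given finite $P\subseteq M^2$, $Q\subseteq M^d$ with $|P|=m$, $|Q|=n$ and the assumption that $E\cap(P\times Q)$ is $K_{k,k}$-free, I would apply Theorem~\ref{lem-r_cut} to $H=Q$ with a parameter $r$ to be optimized, producing a cover of $M^2$ by $t=O(r^2)$ cells $X_1,\dots,X_t$, each crossed by at most $n/r$ of the fibers $\{E(x;b):b\in Q\}$. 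For each $X_i$ set $m_i=|P\cap X_i|$ (boundary points assigned canonically), and split $Q$ into $Q_i^{\mathrm{cr}}$ (fibers crossing $X_i$, count $\le n/r$) and $Q_i^{\mathrm{co}}$ (fibers containing $X_i$). Within $X_i$, crossing incidences are controlled by K\H{o}v\'ari--S\'os--Tur\'an applied to the $K_{k,k}$-free bipartite subgraph, giving $O_k(m_i(n/r)^{1-1/k}+n/r)$, while containing incidences contribute $m_i|Q_i^{\mathrm{co}}|$, which the $K_{k,k}$-free constraint bounds by $O_k(m_i+|Q_i^{\mathrm{co}}|)$ since $\min(m_i,|Q_i^{\mathrm{co}}|)<k$. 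Summing over $i$ using $\sum m_i=m$, the cell-count bound $\sum|Q_i^{\mathrm{cr}}|=O(rn)$, and the tameness of the cell family to control $\sum|Q_i^{\mathrm{co}}|=O(n)$ (each fiber lies in only $O(1)$ cells, since each cell is an intersection of at most two $\Psi$-definable sets), then optimizing over $r$ --- with $r\le 1$ and $r\ge n$ handled as trivial boundary cases --- yields the asserted bound.

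For part (2), the plan is a compactness-plus-indiscernibility extraction, exploiting that $o$-minimality implies NIP. Pass to a sufficiently saturated elementary extension $\CM^*\succ\CM$. If $E$ contains $K_{k',k'}$ for $k'$ sufficiently large, apply Ramsey to extract a subconfiguration on which the joint type is constant, and by compactness produce a mutually indiscernible infinite array $(p_i)_{i<\omega}$, $(q_j)_{j<\omega}$ in $\CM^*$ with $E(p_i,q_j)$ for all $i,j$. Now invoke classical $o$-minimal cell decomposition on the definable family $\{E(x,q)\}_{q\in M^d}$: there is a finite list of formulas $\varphi_1(x;v),\dots,\varphi_N(x;v)$, depending only on $E$, such that for some index and some parameter $b$ built from finitely many $q_j$'s, $\varphi(M^*,b)$ is an infinite set contained in $E(M^*,q_j)$ for every $j<\omega$ and containing infinitely many $p_i$'s. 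A symmetric argument in the dual family $\{E(p,y)\}_{p\in M^2}$ produces $\psi(y;w)$ and $c$ with $\psi(M^*,c)$ infinite and $\varphi(M^*,b)\times\psi(M^*,c)\subseteq E$. Descending to $\CM$ by elementarity completes the argument, with the uniform threshold $k'$ emerging by a final compactness argument from the finiteness of the candidate-formula list.

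The main obstacle I expect is in part (2): ensuring that the formulas $\varphi,\psi$ can be drawn from a finite list depending only on $E$. This is exactly where the \emph{uniformity} of the distal cell decomposition in Theorem~\ref{thm:cell-dec} --- encoded in its finite family $\Psi$ of cell-defining formulas --- becomes essential, since without it the Ramsey extraction could produce cells of unbounded complexity, blocking the final compactness argument that extracts the absolute threshold $k'$.
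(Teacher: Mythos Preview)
Your proposal for part (1) has two genuine gaps.

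First, applying K\H{o}v\'ari--S\'os--Tur\'an within each cell yields exponents that depend on $k$, not on $d=|y|$: your crossing term $\sum_i m_i(n/r)^{1-1/k}$ optimizes to $O\bigl(m^{k/(2k-1)}n^{(2k-2)/(2k-1)}\bigr)$, which for $k>d$ is strictly weaker than the asserted $m^{d/(2d-1)}n^{(2d-2)/(2d-1)}$. The paper instead feeds the bound $\vc(E)\le d$ from Fact~\ref{fac: vc bound in o-min} into Fact~\ref{VCBoundOnEdges}, obtaining the exponent $1-1/d$ inside each cell regardless of $k$; this is where $d$ enters the final answer.

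Second, your claim $\sum_i|Q_i^{\mathrm{co}}|=O(n)$ is not justified and is false in general: a single fiber $E(x;q)$ may contain arbitrarily many cells (take $E(x;q)=M^2$), so ``each fiber lies in only $O(1)$ cells'' fails. The $K_{k,k}$-freeness forces $\min(m_i,|Q_i^{\mathrm{co}}|)<k$, but this does not control $\sum_i|Q_i^{\mathrm{co}}|$ over the many cells with $0<m_i<k$, and summing the crude bound $kn$ over those cells gives a term of order $r^2n$, which swamps the main term.

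The paper's argument for (1) (carried out in Theorem~\ref{thm: distal Zarank}) avoids both problems with a different architecture: rather than summing over all cells at once, it locates a \emph{single} rich cell $C$ via the cutting lemma, applies Fact~\ref{VCBoundOnEdges} inside $C$ to find a point $p\in P'$ with few crossing neighbors, bounds the containing neighbors of $p$ by $k-1$ directly (since $|P'|\ge k$), removes $p$, and iterates. No global sum over $|Q_i^{\mathrm{co}}|$ is ever needed.

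For part (2) your route is genuinely different from the paper's, which is a short application of the definable strong Erd\H{o}s--Hajnal property in distal structures (Fact~\ref{fac: definable strong EH in distal}) to the \emph{counting measures} on the finite witnessing sets, together with elimination of $\exists^\infty$; this immediately produces the uniform formulas $\varphi,\psi$ and the threshold $k'$. Your indiscernibility sketch may be completable, but step~3 is where the work lies and your justification does not yet do it: the finite family $\Psi$ from Theorem~\ref{thm:cell-dec} defines cells in $M^2$ cut out by finitely many parameters from $S$, not a definable subset of $\bigcap_j E(M^*,q_j)$, so the passage from ``cells of bounded description'' to ``an infinite $\varphi(M^*,b)$ contained in every $E(M^*,q_j)$'' needs an argument you have not supplied.
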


Combining the two parts, it follows that either $E$ contains a product of two infinite definable sets, or the upper bound on the number of edges in part (1) holds for all finite sets $P,Q$ with some fixed constant $\alpha =\alpha (\theta)$.

The special case with $d = 2 $ can be naturally viewed as a generalization of the classical Szemer\'edi-Trotter theorem for o-minimal structures.
\begin{cor}\label{cor: Sz-Trot}
	Let $\CM$ be an o-minimal expansion of a real closed field. Then for every $\theta$-definable relation $E(x,y) \subseteq M^2 \times M^2$ there is a constant $\alpha \in \mathbb{R}$ and some formulas $\varphi(x,v), \psi(y,w)$,  depending only on $\theta$, such that exactly one of the following occurs:

\begin{enumerate}
	\item For any finite $P \subseteq M^2, Q \subseteq M^2$, $|P|=m, |Q| = n$ we have $$ |E(P,Q)| \leq \alpha \left( m^{\frac{2}{3}} n^{\frac{2}{3}} + m + n \right),$$
	\item there are some parameters $b \in M^v, c \in M^w$ such that both $\varphi(M,b)$ and $ \psi(M,c)$ are infinite and $\varphi(M,b) \times \psi(M,c) \subseteq E$.
\end{enumerate}

\begin{rem}
While this paper was in preparation, we have learned that Basu and Raz \cite{basu2016minimal} have obtained a special case of Corollary \ref{cor: Sz-Trot} using different methods.
\end{rem}

\end{cor}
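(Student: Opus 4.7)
The plan is to deduce the corollary directly from Theorem \ref{thm: everything o-min} by specializing to $d = 2$, since $\frac{d}{2d-1} = \frac{2d-2}{2d-1} = \frac{2}{3}$ in that case, matching the bound in part (1) of the corollary. The key conceptual point is that the corollary packages the two parts of the theorem into a clean dichotomy, and most of the work is in extracting mutual exclusivity of the two alternatives.

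First, I would take $k'$ and the formulas $\varphi(x,v)$, $\psi(y,w)$ as supplied by part (2) of Theorem \ref{thm: everything o-min} (applied with $d=2$), and set $k = k'$ in part (1), producing a constant $c_0 = c(E, k')$. I now split on whether $E$ contains a copy of $K_{k',k'}$ as a subgraph of the bipartite incidence graph on $M^2 \times M^2$. If it does, then by part (2) of the theorem there are parameters $b, c$ such that $\varphi(M,b)$ and $\psi(M,c)$ are infinite and $\varphi(M,b) \times \psi(M,c) \subseteq E$, placing us in case (2) of the corollary. If $E$ contains no such $K_{k',k'}$, then for every finite $P, Q \subseteq M^2$ the restriction $E \cap (P \times Q)$ also avoids $K_{k',k'}$, so part (1) of the theorem yields $|E(P,Q)| \leq c_0(m^{2/3} n^{2/3} + m + n)$, placing us in case (1).

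The final step is to verify mutual exclusivity, which is where one must be slightly careful, though not in a deep way. If case (2) holds, then $\varphi(M,b)$ and $\psi(M,c)$ are infinite definable sets contained in $M^2$, so I can pick arbitrarily large finite $P \subseteq \varphi(M,b)$ and $Q \subseteq \psi(M,c)$ with $|P| = |Q| = n$; then $E(P,Q) \supseteq P \times Q$ has size $n^2$, which grows strictly faster than $c_0(n^{4/3} + 2n)$, contradicting case (1) for $n$ sufficiently large. Thus at most one of (1), (2) holds, and the case split above shows at least one does.

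I do not expect any genuine obstacle here; the only thing to be careful about is that the $k$ in part (1) of the theorem is quantified first while the constant depends on $k$, so one must fix $k = k'$ before extracting $c_0$, and that the formulas $\varphi, \psi$ in the corollary are exactly those produced by part (2) of the theorem (so they indeed depend only on $E$, not on $P$ or $Q$). Everything else is a formal rewriting of the two parts of Theorem \ref{thm: everything o-min} into the dichotomous form demanded by the corollary.
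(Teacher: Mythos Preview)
Your proposal is correct and follows essentially the same approach as the paper: the paper deduces the general-$d$ version of this corollary at the end of Section~\ref{sec: Zarank} by taking $k'$, $\varphi$, $\psi$ from part (2) of Theorem~\ref{thm: everything o-min} and then setting $k = k'$ in part (1), exactly as you do. You go slightly further than the paper in explicitly verifying mutual exclusivity (the ``exactly one'' clause), which the paper leaves implicit; your argument for this via $n^2$ versus $n^{4/3}$ growth is correct.
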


\subsection*{Acknowledgements} 
We thank Shlomo Eshel and the anonymous referee for pointing out some inaccuracies and suggestions on improving the paper.
Chernikov was supported by the NSF Research Grant DMS-1600796, by the NSF CAREER grant DMS-1651321 and by an Alfred P. Sloan Fellowship.
Galvin was supported by the Simons Foundation.
Starchenko was supported by the NSF Research Grant DMS-1500671.

\section{Preliminaries and the distal cell decomposition}\label{sec: prelims and distality}

Throughout this section we fix  a first-order structure $\CM$  in a language $\CL$. At this point we don't make any additional assumptions on $\CM$, e.g. we may work in ``set theory'',
i.e. in a structure where every subset is definable.  
We introduce some basic notation and terminology. Given a tuple of variables $x$, we let $|x|$ denote its length. For each $n \in \mathbb{N}$, $M^n$ denotes the corresponding cartesian power of $M$, the underlying set of $\CM$.
For a fixed formula $\varphi(x;y) \in \CL$ with two groups of variables $x$ and $y$, given $b \in M^{|y|}$ we write $\varphi(M;b)$ to denote the set $\{ a \in M^{|x|} : \CM \models \varphi(a;b)  \}$. Hence the formula $\varphi(x;y)$ can be naturally associated with the \emph{definable family} of sets $\{ \varphi(M;b) : b \in M^{|y|}\}$. E.g., if $\CM$ is the field of reals, all sets in such a family for a fixed $\varphi(x;y)$ are semialgebraic of description complexity bounded by some $d = d(\varphi)$ and conversely, the  family of all semialgebraic sets of description complexity bounded by some fixed $d$ can be obtained in this way for an appropriate choice of the formula $\varphi(x;y)$. We refer to  \cite{distal} for a more detailed introduction and examples of the relevant model-theoretic terminology.

\begin{defn}
For sets $A,X \subseteq M^{d}$ 
we say that $A$ \emph{crosses} $X$ if both 
$X\cap A$ and $X\cap \neg A$ are nonempty.
\end{defn}

We extend the above definition to a set of formulas. 

\begin{defn}\label{def: definable, crossing} Let $\Phi(x;y)$ be a set of $\CL$-formulas of the form $\varphi(x;y)$ and $S\subseteq
  M^{|y|}$.
 \begin{enumerate}
  \item We say that a subset $A\subseteq M^{|x|}$ is \emph{$\Phi(x;S)$-definable}
    if $A=\varphi(M;s)$ for some $\varphi(x;y)\in \Phi$ and $s\in S$. 
  \item For a set $\Delta\subseteq M^{|x|}$ we say that \emph{$\Phi(x;S)$ 
    crosses $\Delta$} if some $\Phi(x;S)$-definable set crosses $\Delta$. In
  other words $\Phi(x;S)$ does not cross $\Delta$ if for any $\varphi(x;y)
  \in \Phi(x;y)$ and $s\in S$ the formula  $\varphi(x;s)$ has a constant truth
  value on $\Delta$. 
  \end{enumerate}
\end{defn}

We define a very general combinatorial notion of an abstract cell decomposition for formulas (equivalently, for definable families of sets).

\begin{defn}\label{def: cell decomp}Let $\Phi(x;y)$ be a finite set of formulas. 
  \begin{enumerate}
  \item Given a finite set $S \subseteq M^{|y|}$, a finite family $\CF$ of subsets of $M^{|x|}$ is
    called \emph{ an abstract  cell decomposition for $\Phi(x;S)$} if $M^{|x|}=\cup
      \CF$  and every $\Delta\in \CF$ is not crossed by $\Phi(x;S)$. 
    \item  \emph{An abstract cell decomposition for  $\Phi(x;y)$} is  an assignment $\CT$ that to each finite
set $S\subseteq M^{|y|}$ assigns an abstract  cell decomposition  $\CT(S)$ for $\Phi(x;S)$.
  \end{enumerate}
\end{defn}

\begin{rem}
  In the above definition, the term ``cell decomposition'' is understood in a very weak sense. Firstly, the ``cells'' in $\CT(S)$ are not required to have any ``geometric'' properties, and  secondly, we don't require the family
  $\CT(S)$ to partition $M^{|x|}$, but only ask for it to be a covering.
 \end{rem}

Every $\Phi(x;y)$ admits an obvious abstract cell decomposition, with $\CT(S)$ consisting of the atoms in the Boolean algebra generated by the $\Phi(x;S)$-definable sets. In general, defining these cells would require longer and longer formulas when $S$ grows, and the aim of the following definitions is to avoid this possibility.

\begin{defn}
Let $\Phi(x;y)$ be a finite set of formulas and $\CT$ an abstract  cell
decomposition for $\Phi(x;y)$. 

We say that $\CT$ is \emph{weakly
  definable} if there is a finite set of formulas
$\Psi(x; \bar y)=\Psi(x;y_1,\dotsc,y_k)$ with
$|y_1|=\ddotsb=|y_k|=|y|$ such that for any finite $S\subseteq M^{|y|}$, 
every $\Delta\in \CT(S)$ is  $\Psi(x;S^k)$-definable  (i.e., $\Delta=\psi(M; s_1,\dotsc,s_k)$ for some $s_1,\dotsc,s_k \in S$ and $\psi\in \Psi$).  
In this case we also say that $\Psi(x,\bar y)$ weakly defines $\CT$.

 \end{defn}

 \begin{rem}\label{rem:sergei-new}
If  $\CT$ is  an abstract  cell decomposition for $\Phi(x;y)$ that is  weakly
defined by $\Psi(x;\bar y)$   then $\Psi(x;\bar y)$ does not determine
$\CT$ uniquely.  However there is a maximal abstract cell decomposition
$\CT^{\textrm{max}}$  weakly defined by  $\Psi(x;\bar y)$, where 
 $\CT^{\textrm{max}}(S)$ consists of \emph{all} $\Psi(x;S^k)$-definable sets $\Delta$ 
such that $\Phi(x;S)$ does not cross $\Delta$. 
 \end{rem}

For combinatorial applications discussed in this paper it is desirable  to have a cell decomposition with as few sets as
possible, and also  to have control over the sets appearing in
$\CT(S)$ in a definable way.

\begin{defn}\label{def: def cell decomp} Let $\Phi(x;y)$ be a finite set of formulas. 
We say that an abstract  cell decomposition $\CT$  
for $\Phi$ is \emph{definable} if it is weakly defined by some $\Psi(x;y_1,\dotsc,y_k)$ and 
if for every finite $S\subseteq M^{|y|}$  and each $\Psi(x; S^k)$-definable $\Delta \subseteq M^{|x|}$
there is  a set $\CI(\Delta)\subseteq M^{|y|}$, uniformly definable in $\Delta$, 
such that 
\begin{equation}
  \label{eq:1}
\CT(S)=\{ \Delta\in \Psi(S) \colon \CI(\Delta)\cap S 
=\emptyset\}. 
 \end{equation}
By the uniform definability of $\CI(\Delta)$ we mean  that  for every
$\psi(x;\bar y )\in \Psi(x;\bar y)$ there is a formula
$\theta_\psi(y;\bar y)$ such that for any $s_1,\dotsc,s_k\in
M^{|y|}$ if $\Delta=\psi(M;s_1,\dotsc,s_k)$ then 
$\CI(\Delta)=\theta_\psi(M;s_1,\dotsc,s_k)$. 
\end{defn}

For example, $\CT^{\textrm{max}}$ from Remark~\ref{rem:sergei-new} is definable with $\CI(\Delta)=\{ s\in M^{|y|} \colon   
 \Phi(x;s) \text{ crosses }\Delta \}$.

\begin{rem}\label{rem: I Delta contains crossing params}
It follows from Definition \ref{def: def cell decomp} that for every $\Psi(x;M)$-definable set $\Delta \subseteq M^{|x|}$, the set of all $s \in M^{|y|}$ such that $\Phi(x;s)$ crosses $\Delta$ is contained in $\CI(\Delta)$ (strict containment is possible, however).

Indeed, assume that $s \in M^{|y|}$ and $\varphi(x;y) \in \Phi$ are such that $\varphi(x;s)$ crosses $\Delta$. By Definition \ref{def: cell decomp}(1), necessarily $\Delta \notin \CT(\{ s \})$. But then $\CI(\Delta) \cap \{s \} \neq \emptyset$ by  (\ref{eq:1}), hence $s \in \CI(\Delta)$.
\end{rem}

\begin{rm}

\end{rm}

As it was shown  in \cite{distal}, such combinatorial definable cell decompositions have a close connection to the model-theoretic notion of distality. Distal structures were introduced in \cite{DistalPierre} for purely model theoretic purposes (we don't give the original definition here). The following fact was pointed out in \cite{distal} and can be used as the definition of a distal structure in this paper. 
\begin{fact} \label{fac: char of distality} The following are equivalent for a first-order structure $\CM$.
\begin{enumerate}
	\item $\CM$ is distal,
	\item for every formula $\varphi(x;y)$ there is a weakly definable cell
  decomposition for $\{ \varphi(x;y) \}$,
  \item for every formula $\varphi(x;y)$ there is a definable cell decomposition for $\{ \varphi(x;y) \}$.
	\end{enumerate}
\end{fact}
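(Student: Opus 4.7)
The statement is an equivalence of three conditions, and the plan is to prove it by the cycle $(3) \Rightarrow (2) \Rightarrow (1) \Rightarrow (3)$. The implication $(3) \Rightarrow (2)$ is immediate, since a definable cell decomposition in the sense of Definition \ref{def: def cell decomp} is in particular weakly definable (the witnessing $\Psi(x;\bar y)$ is the same, one simply forgets the $\CI(\Delta)$ data).

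For $(2) \Rightarrow (1)$, the approach is to run the contrapositive: assuming $\CM$ is not distal, exhibit a formula $\varphi(x;y)$ for which no finite $\Psi(x;\bar y)$ can weakly define a cell decomposition. The model-theoretic definition of distality (from \cite{DistalPierre}) amounts to an indiscernibility-extension property: whenever $I_1 + a + I_2$ is an indiscernible sequence and $I_1 + I_2$ is indiscernible over some parameter $b$, then so is $I_1 + a + I_2$. Failure of this property produces, for some $\varphi(x;y)$, a configuration where the $\varphi$-behavior of $b$ on $a$ cannot be read off from the $\varphi$-behavior on $I_1 \cup I_2$. Iterating this configuration along an indiscernible sequence yields arbitrarily long $S$ such that any $\Psi(x;\bar y)$-definable-over-$S$ set containing $b$ is crossed by some $\varphi(x;s)$ with $s \in S$, contradicting that $\CT(S)$ covers $M^{|x|}$ by non-crossed cells.

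For $(1) \Rightarrow (3)$, the plan is to take the canonical candidate $\CT^{\mathrm{max}}$ described after Remark \ref{rem: I Delta contains crossing params}: cells are all $\Psi(x;\bar y)$-definable-over-$S$ sets $\Delta$ such that $\Phi(x;S)$ does not cross $\Delta$, with $\CI(\Delta) = \{s \colon \varphi(x;s) \text{ crosses } \Delta\}$. Uniform definability of $\Psi(S)$ and of $\CI(\Delta)$ is built into this construction, so the only nontrivial content is that $\CT^{\mathrm{max}}(S)$ actually covers $M^{|x|}$. This is exactly the statement of strong honest definitions for distal theories (established in \cite{distal}): distality yields, for each $\varphi(x;y)$, a finite $\Psi(x;\bar y)$ such that for every finite $S \subseteq M^{|y|}$ and every $p \in M^{|x|}$ there exist $s_1,\dotsc,s_k \in S$ and $\psi \in \Psi$ with $p \in \psi(M;s_1,\dotsc,s_k)$ and $\Phi(x;S)$ not crossing $\psi(M;s_1,\dotsc,s_k)$. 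Applying this to an arbitrary $p$ shows $p$ lies in some cell of $\CT^{\mathrm{max}}(S)$.

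The main obstacle is concentrated in $(1) \Rightarrow (3)$, namely producing a \emph{uniform} finite family $\Psi(x;\bar y)$ witnessing the covering property; the indiscernibility formulation of distality is a priori a statement about infinite sequences and a single parameter, and converting it into a finitary uniform definability result requires a compactness argument together with a careful induction on the structure of indiscernible configurations. Once this tool is invoked, fitting the data into the template of Definition \ref{def: def cell decomp} is essentially formal.
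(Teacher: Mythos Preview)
Your proposal is correct and matches the paper's approach: the paper defers the equivalence $(1)\Leftrightarrow(2)$ to \cite[Theorem 21]{chernikov2015externally} (the strong honest definitions characterization of distality that you invoke), and then observes that if $\CT$ is a weakly definable cell decomposition for $\varphi$ then $\CT^{\mathrm{max}}$, with $\CI(\Delta)$ taken to be the set of crossing parameters, is a definable one, yielding $(2)\Rightarrow(3)$. Your cycle $(3)\Rightarrow(2)\Rightarrow(1)\Rightarrow(3)$ traces the same route, with your $(1)\Rightarrow(3)$ step effectively factoring through $(2)$ via the same $\CT^{\mathrm{max}}$ construction.
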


 Indeed,  equivalence of the original definition of distality and existence of weakly definable cell
  decompositions is given by \cite[Theorem 21]{chernikov2015externally}; and if $\CT$ is a weakly definable cell decomposition for $\varphi(x;y)$, then $\CT^{\textrm{max}}$  from Remark~\ref{rem:sergei-new} is definable.
 
 Examples of distal structures include: 
 \begin{enumerate}
 	\item o-minimal structures;
 	\item Presburger arithmetic $(\mathbb{Z}, +, 0, <)$;
 	\item the field of $p$-adics $\mathbb{Q}_p$;
 \end{enumerate}
  (we refer to the introduction of \cite{distal} for a more detailed discussion).

There are several contexts in model theory relevant for the topics of this paper where certain notions of cell decomposition play a prominent role (e.g. o-minimal cell decomposition, $p$-adic cell decomposition, etc.). These cell decompositions tend to carry more geometric information, while the one discussed here captures combinatorial complexity. To distinguish from those cases, and in view of Fact \ref{fac: char of distality}, we will from now on refer to a definable cell decomposition $\CT$ for a finite set of formulas $\Phi(x;y)$ as in Definition \ref{def: def cell decomp} as a \emph{distal cell decomposition} for $\Phi(x;y)$. Hence, \textbf{a structure $\CM$ is distal if and only if every formula admits a distal cell decomposition}.
  
   Distality of the examples listed above had been established by different (sometimes infinitary) methods and the question of obtaining the exact bounds on the size of the corresponding distal cell decompositions hasn't been considered. While it is easy to verify in the examples listed above that all formulas $\varphi(x,y)$ with $|x|=1$ admit a cell decomposition $\CT$ with the best possible bound $|\CT(S)| = O(|S|)$, already the case of formulas with $|x|=2$ becomes more challenging (and grows in complexity with $|x|$). In Section \ref{sec: omin-cell-decomp} we establish that in an o-minimal expansion of a field, all formulas with $|x|=2$ admit a distal cell decomposition $\CT$ with the optimal bound $|\CT(S)| = O(|S|^2)$ (the case $|x|\geq 3$ remains open, even in the semialgebraic case).   
   
%

\section{Distal cutting lemma}
In this section we show how a bound on the size of a distal cell decomposition for a given definable family can be used to obtain a definable cutting lemma with the corresponding bound 
on its size.

Our proof generalizes (and closely follows) the axiomatic treatment of the Clarkson-Shor random sampling technique in \cite[Section 6.5]{mbook}. 

\begin{defn}
($\frac{1}{r}$-cutting) Let $\mathcal{F}$ be a finite family of subsets of a set $X$ with $|\mathcal{F}| = n$. Given a real $r \geq 1$, we say that a family $\mathcal{C}$ of subsets of $X$ is an \emph{$\frac{1}{r}$-cutting for $\mathcal{F}$} if the sets in $\mathcal{C}$ form a covering of $X$ and each set in $\mathcal{C}$ is crossed by at most $\frac{n}{r}$ sets in $\mathcal{F}$.

\end{defn}

Throughout this section we fix  a first-order structure $\CM$  in a language $\CL$.

\begin{thm} \label{lem-r_cut}
(Distal cutting lemma) Let $\varphi(x;y) \in \mathcal{L}$ be a formula admitting a distal cell decomposition $\CT$ (weakly defined by a  finite set of formulas $\Psi(x; y_1, \ldots, y_s)$ --- see Definition \ref{def: def cell decomp}) with 
$|\CT(S)| = O(|S|^d)$. 

Then for any finite $H \subseteq M^{|y|}$ of size $n$ and any real $r$ satisfying $1 < r < n$,  the family $\{\varphi(M;a): a \in H\}$ of subsets of $M^{|x|}$ admits a $\frac{1}{r}$-cutting $X_1, \ldots, X_t$ with
$$
t \leq Cr^d
$$
for some constant $C = C(\varphi)$ (and independent of $H$, $r$ and $n$).

Moreover, each of the $X_i$'s is an intersection of at most two $\Psi(x;H^s)$-definable sets.
\end{thm}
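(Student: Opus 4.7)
The plan is to adapt the Clarkson--Shor random sampling technique to the distal setting. First, I would draw a random sample $R \subseteq H$ of size proportional to $r$ and form the distal cell decomposition $\CT(R)$. By hypothesis this produces at most $C|R|^d = O(r^d)$ cells, each of which is $\Psi(x;R)$-definable (so in particular $\Psi(x;H)$-definable) and, by construction, not crossed by any $\varphi(x;a)$ with $a \in R$. This gives the first layer of the covering: cells $\Delta$ whose full crossing count $n_\Delta := |\{a \in H : \varphi(x;a)\text{ crosses }\Delta\}|$ is already at most $n/r$ are kept as single $\Psi(x;H)$-definable pieces.

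The main probabilistic input is a Clarkson--Shor style exponential decay estimate: for a random $R$ of the correct size, the expected number of cells $\Delta \in \CT(R)$ with $n_\Delta > k(n/r)$ is $O(r^d \cdot 2^{-\Omega(k)})$. Translating into the distal language, every $\Delta \in \Psi(H)$ is specified by some defining tuple $(a_1,\ldots,a_s) \in H^s$ with $\Delta = \psi(M;a_1,\ldots,a_s)$ for some $\psi \in \Psi$, and carries a conflict set $\CI(\Delta) \cap H$ which, by Remark \ref{rem: I Delta contains crossing params}, contains every parameter $a$ such that $\varphi(x;a)$ crosses $\Delta$. The event $\Delta \in \CT(R)$ is then encoded by $\{a_1,\ldots,a_s\} \subseteq R$ together with $\CI(\Delta)\cap R = \emptyset$. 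The classical Clarkson--Shor computation expands the probability of this event and sums; however, in the absence of a general position hypothesis, the same $\Delta$ can be represented by many defining tuples and a naive counting fails. This is precisely what Lemma \ref{lem-set_system_corr} on correlations in set systems is designed to handle, by decoupling ``the cell survives the sampling'' from ``this particular tuple defines it,'' and I expect its invocation to be the main obstacle of the proof.

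For each \emph{overcrossed} cell $\Delta$ with $n_\Delta > n/r$, I would apply the distal cell decomposition a second time, to a random sub-sample of the conflict set $H_\Delta := \{a \in H : \varphi(x;a)\text{ crosses }\Delta\}$ of size $\asymp n_\Delta r / n$, and intersect the resulting sub-cells with $\Delta$. Each sub-cell is then exhibited as the intersection of two $\Psi(x;H)$-definable sets, matching the ``moreover'' clause; formulas indexed by $H \setminus H_\Delta$ cross neither $\Delta$ nor any of its sub-pieces, and a second application of the exponential decay bound to the secondary sample guarantees that each sub-cell is crossed by at most $n/r$ formulas from $H_\Delta$. The total number of pieces then boils down to
$$\sum_{\Delta \in \CT(R)} \bigl(\max\{1, n_\Delta r/n\}\bigr)^d = O(r^d),$$
again by exponential decay in dyadic blocks; averaging over $R$ yields a realization of the sample meeting the claimed bound $t \leq Cr^d$, completing the construction of the cutting.
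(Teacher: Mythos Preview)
Your outline matches the paper's approach closely: first-level random sampling, Clarkson--Shor exponential decay (the paper's Tail Bound Lemma), the correlation lemma to handle multiple defining tuples, and a second-level refinement of overcrossed cells, with the final count obtained by dyadic summation over the excess $t_\Delta := n_\Delta r/n$. You also correctly identify that Lemma~\ref{lem-set_system_corr} is the crux of the tail bound in the absence of general position.

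There is, however, a genuine gap in your second-level step. A random sub-sample of $H_\Delta$ of size $\asymp t_\Delta$ produces $O(t_\Delta^d)$ sub-cells, but the exponential decay bound only controls the \emph{expected number} of sub-cells whose crossing count exceeds a multiple of the threshold; with $k=1$ this expectation is of the same order as the total number of sub-cells, so you cannot conclude that \emph{every} sub-cell is crossed by at most $n/r$ formulas. Since the theorem requires a hard bound on each piece (and the ``moreover'' clause forbids recursing further), you must secure this deterministically at the second level. The paper does this via an intermediate \emph{Suboptimal Cutting Lemma}: for the conflict set of $\Delta$ one takes an oversized sample of size $\asymp t_\Delta \log t_\Delta$ and applies the tail bound with $t \asymp \log t_\Delta$, driving the expected number of bad sub-cells below $1$ and hence finding a sample with \emph{no} bad sub-cells. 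The price is $O(t_\Delta^d \log^d t_\Delta)$ sub-cells rather than $O(t_\Delta^d)$; this is then absorbed crudely via $t_\Delta^d \log^d(t_\Delta+1) \le t_\Delta^{2d}$, and the dyadic sum $\sum_{i\ge 0} 2^{2d(i+1)} \cdot \mathbf{E}|\CT(S)_{\ge 2^i}| = O(r^d)$ closes the argument. Your displayed sum $\sum_\Delta (\max\{1,t_\Delta\})^d$ is therefore too optimistic as stated; replace it with the $t_\Delta^{2d}$ bound (or equivalently insert the suboptimal lemma) and the proof goes through.

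A minor point: the conflict set that makes the tail bound work is $\CI(\Delta)\cap H$ from Definition~\ref{def: def cell decomp}, which may properly contain your $H_\Delta$; condition (C3) requires $\CI(\Delta)\cap R=\emptyset$, not merely that no crossing parameter lies in $R$. You should carry $\CI_H(\Delta)$ through the tail estimate, and only use $H_\Delta \subseteq \CI_H(\Delta)$ at the end when reading off the crossing bound.
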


\begin{rem}\label{rem: triv cut r geq n}
We note that Theorem \ref{lem-r_cut} is trivially true for $r=1$ (with $t=1$ and $X_1 = X$), and for $r \geq n$ since the distal cell decomposition itself will give a desirable partition in that case.	
\end{rem}

In the rest of this section we present a proof of Theorem~\ref{lem-r_cut}.

\medskip

We fix  $\CT, \Psi$ and $H$ as in the assumption of the theorem.

By  Definition \ref{def: def cell decomp}, for each finite $S \subseteq M^{|y|}$, we have a finite collection ${\mathcal T}(S)$ of subsets of $M^{|x|}$ that covers $M^{|x|}$ and satisfies the following conditions.

\begin{enumerate}[labelindent=10pt]
	\item[\textbf{(C1)}] Let 
$$
{\rm Reg} := \{\Delta : \Delta \in {\mathcal T}(S)~\mbox{for some $S \subseteq H$}\}.
$$  
Then every set in {\rm Reg} is definable by an instance of a formula from $\Psi$ with parameters in $H$.
	\item[\textbf{(C2)}] For every $S \subseteq H$ we have
$$
|{\mathcal T}(S)| \leq C'\left(|S|^d+1\right)
$$
for some constant $C'$ depending only on $\varphi$. (The hypothesis of the theorem ensures that for \emph{non-empty} $S$ we have $|{\mathcal T}(S)| \leq C|S|^d$ for some constant $C=C(\varphi)$. We add ``$+1$'' here to take into account the case $S = \emptyset$.)



\item[\textbf{(C3)}] Let $\Delta \in {\rm Reg}$.   We associate to it a collection ${\mathcal D}(\Delta)$ of subsets of $H$, called the {\em defining sets} of $\Delta$, via 
$$
{\mathcal D}(\Delta) := \{S \subseteq H: |S| \leq s,~\Delta \in {\mathcal T}(S)\}.
$$ 
(Here $s$ is a fixed constant corresponding to the number of parameters in $\Psi(x;y_1, \ldots, y_s)$ given by the distal cell decomposition and depending only on $\varphi$).

Given $\CI$ as in Definition \ref{def: def cell decomp}, we define $\CI_H(\Delta) := \CI(\Delta) \cap H$.  Notice that $\CI_H(\Delta)$ contains all of the $a \in H$ such that $\varphi(x;a)$ crosses $\Delta$ (by Remark \ref{rem: I Delta contains crossing params}). 

We have:
$$
\Delta \in {\mathcal T}(S) \iff {\mathcal I}_H(\Delta) \cap S = \emptyset~\mbox{and there is}~S_0 \in {\mathcal D}(\Delta)~\mbox{with}~S_0 \subseteq S.
$$

\end{enumerate}

\begin{rem} It  follows from the proof that the distal cutting lemma (Theorem \ref{lem-r_cut}) holds for any abstract cell decomposition satisfying the conditions (C1)--(C3) with an appropriately chosen relation $\CI(\Delta)$. 
\end{rem}

\medskip

Before proceeding to the proof of the distal cutting lemma (Theorem \ref{lem-r_cut}) we isolate two key tools. The first is a tail bound on the probability that a cell $\Delta \in {\mathcal T}(S)$ is crossed by many formulas, where $S$ is a randomly chosen subset of $H$.

For $S \subseteq H$ and $t\geq 0$ let ${\mathcal T}(S)_{\geq t}$ denote the set of $\Delta \in {\mathcal T}(S)$ with $|{\mathcal I}_H(\Delta)| \geq tn/r$. Recall that for $0 \leq p \leq 1$ we say that $S \subseteq H$ is selected by {\em independent Bernoulli trials with success probability $p$} if $S$ is selected according to the distribution $\mu$ (supported on the power set of $H$) given by 
$$
\mu(S') = p^{|S'|}(1-p)^{|H|-|S'|}
$$
for each $S' \subseteq H$; observe that this is essentially the process of flipping a biased coin (biased to show heads with probability $p$) $|H|$ times independently, and for $1 \leq i \leq |H|$ putting the $i$th element of $H$ in $S$ if and only if the $i$th flip comes up heads. 

\begin{lem} \label{lem-tail}
(Tail Bound Lemma) Let $\varphi(x;y)$ be a formula as in Theorem \ref{lem-r_cut}. Let $H \subseteq M^{|y|}$ be a finite set of size $n$. Fix $\varepsilon > 0$ and let $r$ be a parameter satisfying $1 \leq r \leq (1-\varepsilon)n$. Let $S \subseteq H$ be selected by independent Bernoulli trials with success probability $r/n$, and let $t \geq 0$ be given. Then there is a constant $C=C(\varepsilon)$ such that
$$
{\bf E}_{\mu}\left(\left|{\mathcal T}(S)_{\geq t}\right|\right) \leq C2^{-t}r^d.
$$
\end{lem}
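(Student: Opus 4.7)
The plan is to adapt Matou\v{s}ek's axiomatic version of the Clarkson-Shor random-sampling technique (Section 6.5 of~\cite{mbook}) to conditions (C1)--(C3) above. The starting point is a factorization identity for $\Pr_\mu[\Delta\in\CT(S)]$. For each $\Delta\in\mathrm{Reg}$, every $S_0\in\CD(\Delta)$ must be disjoint from $\CI_H(\Delta)$: indeed, if $a\in S_0\cap\CI_H(\Delta)$, the ``$\Leftrightarrow$'' in (C3) applied to $S=S_0$ would force $\CI_H(\Delta)\cap S_0=\emptyset$, a contradiction. Consequently the two events $\{\CI_H(\Delta)\cap S=\emptyset\}$ and $\{\exists S_0\in\CD(\Delta):S_0\subseteq S\}$ depend on disjoint collections of Bernoulli coin flips and are independent, giving (with $p=r/n$ and $N(\Delta)=|\CI_H(\Delta)|$)
\[
\Pr_\mu[\Delta\in\CT(S)]=(1-p)^{N(\Delta)}\cdot \Pr_\mu[\exists S_0\in\CD(\Delta):\,S_0\subseteq S].
\]

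For $\Delta$ satisfying $N(\Delta)\geq tn/r$, the first factor is at most $e^{-pN(\Delta)}\leq e^{-t}$; at the cost of a constant absorbing $\log 2$ into $t$, this yields $2^{-t}$. Summing over such $\Delta$ thus reduces the lemma to bounding the defining-set sum
\[
M=\sum_{\Delta\in\mathrm{Reg},\,N(\Delta)\geq tn/r} \Pr_\mu[\exists S_0\in\CD(\Delta):\,S_0\subseteq S]
\]
by $O(r^d)$. My proposed approach partitions $\mathrm{Reg}$ by conflict size into dyadic shells $\mathrm{Reg}_j=\{\Delta:N(\Delta)\in[2^j n/r,\,2^{j+1}n/r)\}$, $j\geq 0$, and for each $j$ compares the shell's contribution to the expected decomposition size of a Bernoulli sample $S^{(j)}$ of probability $p_j=2^{-j}p$, chosen so that the conflict factor $(1-p_j)^{N(\Delta)}=\Theta_\varepsilon(1)$ throughout shell $j$. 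Applying the factorization at scale $p_j$ rearranges $\Pr_\mu[\exists S_0\subseteq S^{(j)}]$ into $\Theta(\Pr_\mu[\Delta\in\CT(S^{(j)})])$ on $\mathrm{Reg}_j$, and then (C2) gives $\mathbb E_\mu[|\CT(S^{(j)})|]=O((p_j n)^d+1)=O((r/2^j)^d+1)$, which bounds each shell's contribution. Paying the cost of relating $\Pr_\mu[\exists S_0\subseteq S_p]$ to $\Pr_\mu[\exists S_0\subseteq S^{(j)}]$ and summing the resulting geometric series in $j$ yields $M=O_\varepsilon(r^d)$, and combined with the $2^{-t}$ factor this gives $\mathbb E_\mu[|\CT(S)_{\geq t}|]\leq C(\varepsilon)\cdot 2^{-t}r^d$.

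The main obstacle is the shell-by-shell exchange. A naive union bound $\Pr_\mu[\exists S_0\subseteq S]\leq\sum_{S_0\in\CD(\Delta)}p^{|S_0|}$ combined with (C2) applied to $|\CT(S_0)|\leq C'(|S_0|^d+1)$ would only give $O(r^s)$, losing a factor of $r^{s-d}$. The improvement to the correct $O(r^d)$ requires re-interpreting the defining-set sum as an expected cell count at a matched scale rather than bounding each $\Pr_\mu[\exists S_0\subseteq S]$ independently. The hypothesis $r\leq(1-\varepsilon)n$ enters precisely here to keep $p$ bounded away from $1$ so that $(1-p)^N\leq e^{-\Omega_\varepsilon(pN)}$ uniformly; the final constant $C=C(\varepsilon)$ absorbs this together with the constants from (C2) and the arity $s$ of the formulas $\Psi$.
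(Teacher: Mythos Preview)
Your factorization $\Pr_\mu[\Delta\in\CT(S)]=(1-p)^{N(\Delta)}\cdot \Pr_\mu[\exists S_0\in\CD(\Delta):S_0\subseteq S]$ is correct, but the next step---replacing the first factor by the uniform bound $e^{-t}$ and then claiming that the residual sum $M$ is $O(r^d)$---does not go through. Your own dyadic-shell outline exposes the problem. The only available tool for ``paying the cost of relating'' $\Pr_p[\exists S_0\subseteq S]$ to $\Pr_{p_j}[\exists S_0\subseteq S^{(j)}]$ is the up-set correlation inequality (Lemma~\ref{lem-set_system_corr} in the paper), which is sharp and gives a factor $(p/p_j)^s=2^{js}$; combined with $\mathbb E[|\CT(S^{(j)})|]=O((r/2^j)^d)$ from (C2), shell $j$ contributes $O(r^d\cdot 2^{j(s-d)})$ to $M$. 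Since one virtually always has $s>d$ (already for intervals on a line $s=2$, $d=1$; in the planar construction of Section~\ref{sec: omin-cell-decomp} $s\geq 4$ while $d=2$), this series \emph{grows} with $j$ rather than decaying geometrically, and summing over $j\leq\log_2 r$ only yields $M=O(r^s)$. The reduction to $M=O(r^d)$ therefore fails.

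The fix is not to throw away the excess decay in $(1-p)^{N(\Delta)}$: on shell $j$ this factor is at most $e^{-2^j}$, and $e^{-2^j}\cdot 2^{j(s-d)}$ \emph{does} sum, with the dominant contribution at $j\approx\log_2 t$, giving $O(t^{s-d}e^{-t}r^d)=O(2^{-t}r^d)$. The paper does exactly this, but with a single rescaling $\tilde p=p/t$ instead of your dyadic one: it bounds $p(\Delta)/\tilde p(\Delta)\leq t^s e^{c-t}$ for $\Delta\in\mathrm{Reg}_{\geq t}$ via Lemma~\ref{lem-set_system_corr} (here the full factor $(1-p)^{N(\Delta)}/(1-\tilde p)^{N(\Delta)}$ is retained and produces the $e^{-t}$), and multiplies by $\mathbb E_{\tilde\mu}[|\CT(S)|]=O((r/t)^d)$. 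So your architecture is salvageable, but the quantity $M$ as you defined it is the wrong intermediate target, and the correlation lemma you gesture at as ``the cost'' needs to be stated and used with the conflict factor still present.
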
 

We use this to derive the second main tool, a cutting lemma that is weaker than Theorem \ref{lem-r_cut}. Here and everywhere else, all logarithms are base $2$.
\begin{lem} \label{lem-subopt_cut}
(Suboptimal Cutting Lemma) Let $\varphi(x;y)$ be a formula as in Theorem \ref{lem-r_cut}. Let $H \subseteq M^{|y|}$ be a finite set of size $n$. Let $r$ be a parameter satisfying $1 < r < n$. There is $S \subseteq H$ with
$$
|{\mathcal T}(S)| \leq Kr^d\log^d (r+1)
$$
for some constant $K$ independent of $H$, $r$ and $n$, and with each $X \in {\mathcal T}(S)$ crossed by at most $n/r$ of the formulas $\{\varphi(x;a): a \in H\}.$  
\end{lem}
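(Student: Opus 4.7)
The plan is to sample $S$ from $H$ by independent Bernoulli trials at a rate slightly larger than $r/n$, so that with positive probability every cell in $\CT(S)$ is crossed by at most $n/r$ of the formulas $\varphi(x;a)$, $a\in H$, while keeping $|\CT(S)|$ of order $r^d\log^d(r+1)$. Fix a constant $K=K(d)$, to be chosen sufficiently large, and set $r' := Kr\log(r+1)$, $p := r'/n$. The edge case $p\geq 1/2$ (when $r$ is of order $n$ up to a log factor) is dispatched directly by taking $S:=H$: by~(\ref{eq:1}) every $\Delta\in\CT(H)$ satisfies $\CI_H(\Delta)=\emptyset$, so by Remark~\ref{rem: I Delta contains crossing params} no formula $\varphi(x;a)$ with $a\in H$ crosses $\Delta$; and $|\CT(H)|\leq C'(n^d+1)=O(r^d\log^d(r+1))$ since $n\leq 2r'$.

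Otherwise $p<1/2$, and I sample $S\subseteq H$ by independent Bernoulli trials with success probability $p$. The mean of $|S|$ is $r'$, and the elementary moment bound $\mathbf{E}|S|^d=O((r')^d)$ combined with (C2) gives
\[
  \mathbf{E}\,|\CT(S)| \;\leq\; C'\bigl(\mathbf{E}|S|^d + 1\bigr) \;=\; O\bigl(r^d\log^d(r+1)\bigr).
\]

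Next, apply the Tail Bound Lemma with its sampling-rate parameter taken to be $r'$ (so its sampling probability $r'/n$ matches our $p$) and threshold $t := r'/r = K\log(r+1)$. Any cell $\Delta\in\CT(S)$ crossed by at least $n/r$ of the formulas $\varphi(x;a)$, $a\in H$, satisfies $|\CI_H(\Delta)|\geq n/r = tn/r'$ by Remark~\ref{rem: I Delta contains crossing params}, and so lies in $\CT(S)_{\geq t}$. The Tail Bound Lemma therefore yields
\[
  \mathbf{E}\,|\CT(S)_{\geq t}| \;\leq\; C\cdot 2^{-t}(r')^d \;=\; O\bigl((r+1)^{-cK}\bigl(Kr\log(r+1)\bigr)^d\bigr)
\]
for an absolute constant $c>0$, which is at most $1/4$ uniformly in $r\in(1,n)$ once $K=K(d)$ is chosen large enough.

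Two applications of Markov's inequality now give, with probability at least $1/2$, the simultaneous events $|\CT(S)|=O(r^d\log^d(r+1))$ and $|\CT(S)_{\geq t}|=0$. Any such $S$ witnesses the lemma. The only real obstacle is the calibration of $K=K(d)$ so that the exponential decay $2^{-t}=(r+1)^{-cK}$ beats the polynomial factor $(r')^d$; this calibration is precisely what forces the extra $\log^d(r+1)$ into the bound, and is exactly the loss that Theorem~\ref{lem-r_cut} will later have to eliminate by a more delicate refinement argument.
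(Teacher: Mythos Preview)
Your proof is correct and follows essentially the same approach as the paper: sample at rate $r'/n$ with $r'=\Theta(r\log(r+1))$, handle the large-$r'$ edge case by taking $S=H$, bound $\mathbf{E}\,|\CT(S)|$ and $\mathbf{E}\,|\CT(S)_{\geq t}|$ with $t=\Theta(\log(r+1))$ via the Tail Bound Lemma, and conclude by a first-moment argument. The only cosmetic differences are that the paper cites the Tail Bound Lemma at $t=0$ rather than invoking (C2) plus the moment bound directly (same content), and combines the two bounds into a single expectation rather than using Markov plus a union bound.
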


\medskip

\begin{proof}[Proof (assuming Lemma~\ref{lem-tail})] 

Let $A$ be such that $3\times 2^{2d}CA^d = 2^A$, where $C$ is the constant appearing in Lemma \ref{lem-tail}. Increasing $C$ if necessary, we may assume that $A \geq 1$. We treat separately the cases $2Ar \log(r+1) \leq n$ and $2Ar \log(r+1) \geq n$. If $2Ar \log(r+1) \geq n$ then we may take $S=H$, since $\CT(H)$ has size  $C'(n^d +1) \leq C'((2A)^dr^d \log^d(r+1) + 1) \leq Kr^d\log^d (r+1)$ for suitably large $K$ (note that by \textbf{(C3)} no instance of $\varphi(x;y)$ over $H$ can cross any of the sets in ${\mathcal T}(H)$).

Suppose now that $2Ar \log(r+1) \leq n$. Set $r' = Ar\log (r+1)$, and note that $r' \geq 1$ as $A \geq 1$, $r > 1$ and $\log$ is base $2$. Applying Lemma \ref{lem-tail} with $r'$ taking the role of $r$ (valid since $r' < n/2$) and with $t=0$ we obtain that if $S \subseteq H$ is selected by independent Bernoulli trials with success probability $r'/n$ (with associated distribution $\mu'$) then
$$
{\bf E}_{\mu'}\left(\left|{\mathcal T}(S)\right|\right) \leq CA^dr^d \log^d(r+1).
$$
Applying Lemma \ref{lem-tail} again with $t=A \log(r+1)$ we get
$$
{\bf E}_{\mu'}\left(\left|{\mathcal T}(S)_{\geq A \log(r+1)}\right|\right) \leq \frac{CA^d r^d \log^d(r+1)}{(r+1)^A} \leq \frac{CA^d}{(r+1)^{A-2d}} \leq 1/3,
$$
the second inequality using $r\log(r+1) \leq (r+1)^2$ and the third using our choice of $A$ and the fact that $r \geq 1$.  By linearity of expectation
$$
{\bf E}_{\mu'}\left(\frac{\left|{\mathcal T}(S)\right|}{3CA^dr^d \log^d(r+1)} + \left|{\mathcal T}(S)_{\geq A \log(r+1)}\right| \right) \leq 2/3,
$$
so there exists an $S \subseteq H$ such that 
$$
\left|{\mathcal T}(S)\right| \leq 3CA^dr^d \log^d(r+1)
$$
and ${\mathcal T}(S)_{\geq A \log(r+1)} = \emptyset$. This last condition implies that each $\Delta \in {\mathcal T}(S)$ is crossed by at most
$(A \log(r+1) n)/r' = n/r$
formulas. 

%
\end{proof}

\medskip

We use Lemmas \ref{lem-subopt_cut} and \ref{lem-tail} to derive Theorem \ref{lem-r_cut}, before turning to the proof of Lemma \ref{lem-tail}.

\medskip

\begin{proof}[Proof of Theorem \ref{lem-r_cut}]
Just as in the proof of Lemma \ref{lem-subopt_cut} we begin by observing that the family ${\mathcal T}(H)$ itself satisfies the conclusion of Theorem \ref{lem-r_cut} for all $r$, with size at most $C'(n^d +1)$. This allows us to assume, say, $r \leq n/2$ and use Lemma \ref{lem-tail}.

Let $S \subseteq H$ be selected by independent Bernoulli trials with success probability $r/n$.

For $\Delta \in {\mathcal T}(S)$ define $t_\Delta$ by $|{\mathcal I}_H(\Delta)| = t_\Delta n/r$. Note that if $t_\Delta \leq 1$ then the number of $a$ in $H$ such that $\varphi(x,a)$ crosses $\Delta$ is no more than $n/r$.

For $\Delta \in {\mathcal T}(S)$ with $t_\Delta > 1$, consider the set ${\mathcal I}_H(\Delta)$. It contains all $a \in H$ for which $\varphi(x,a)$ crosses $\Delta$.  By Lemma \ref{lem-subopt_cut} there is $S' \subseteq {\mathcal I}_H(\Delta)$ with ${\mathcal T}(S')$ having size at most $O(t^d_\Delta \log^d (t_\Delta+1))$ with the property that for every $\Delta' \in {\mathcal T}(S')$, the number of $a \in {\mathcal I}_H(\Delta)$ such that $\varphi(x,a)$ crosses $\Delta'$ is at most
$$
\frac{|{\mathcal I}_H(\Delta)|}{t_\Delta} = \frac{n}{r}.
$$       
In particular that means that for every $\Delta' \in {\mathcal T}(S')$ the number of $a \in H$ such that $\varphi(x,a)$ crosses $\Delta' \cap \Delta$ is at most $n/r$.

It follows that the family of subsets of $M^{|x|}$ consisting of those $\Delta \in {\mathcal T}(S)$ for which $t_\Delta \leq 1$, together with all sets of the form $\Delta' \cap \Delta$ where $\Delta \in {\mathcal T}(S)$ has $t_\Delta > 1$ and $\Delta' \in {\mathcal T}(S')$ (with $S'$ constructed from $S$ via Lemma \ref{lem-subopt_cut}, as described above), forms a cover of $M^{|x|}$ with size at most
\begin{equation} \label{eq-decomp_bound}
\sum_{\Delta \in {\mathcal T}(S)} \left({\bf 1}_{\{t_\Delta \leq 1\}} + Ct_\Delta^d\log^d(t_\Delta+1) {\bf 1}_{\{t_\Delta > 1\}}\right).
\end{equation}
We now upper bound the expectation (with respect to $\mu$) of this quantity. By linearity the expectation is at most
\begin{equation} \label{eq-size}
{\bf E}_{\mu}\left(\left|{\mathcal T}(S)\right|\right) + C\sum_{i \geq 0} {\bf E}_{\mu}\left(\sum_{\Delta \in {\mathcal T}(S) \colon 2^i \leq t_\Delta < 2^{i+1}} t_\Delta^{2d}\right)
\end{equation}
(using $\log (t_\Delta+1) \leq t_\Delta$ for $t_\Delta \geq 1$). 

We bound the first term in (\ref{eq-size}) by an application of Lemma \ref{lem-tail} with $t=0$. This gives
$$
{\bf E}_{\mu}\left(\left|{\mathcal T}(S)\right|\right) \leq O(r^d).
$$
For the second term in (\ref{eq-size}) we have
\begin{multline*}
\sum_{i \geq 0} {\bf E}_\mu \left(\sum_{\Delta \in {\mathcal T}(S) \colon 2^i \leq t_\Delta < 2^{i+1}} t_\Delta^{2d}\right) \leq  \sum_{i \geq 0} 2^{2d(i+1)} {\bf E}_{\mu}\left(\left|{\mathcal T}(S)_{\geq 2^i}\right|\right) \\
 \leq  C' \sum_{i \geq 0} 2^{2d(i+1)} 2^{-2^i} r^d = O(r^d),
\end{multline*}
with the last inequality and the constant $C'$ given by an application of Lemma \ref{lem-tail}. 

We conclude that the expectation of the quantity in (\ref{eq-decomp_bound}) is $O(r^d)$, so there is at least one choice of $S \subseteq H$ for which (\ref{eq-decomp_bound}) is at most $O(r^d)$, proving Theorem \ref{lem-r_cut} (the definability clause follows by \textbf{(C1)} as every set in the constructed covering is an intersection of at most two sets from ${\rm Reg}$).  
\end{proof}

\medskip

Before proving Lemma \ref{lem-tail} we isolate a useful set-systems lemma.
\begin{lem} \label{lem-set_system_corr}
Let $\Omega$ be a set of size $m$, and let $\{D_1, \ldots, D_q\}$ be a collection of subsets of $\Omega$ with $|D_i| \leq u$ for all $i$, $1 \leq i \leq q$, for some $u$. Let
$$
{\mathcal F} = \{X \subseteq \Omega:D_i \subseteq X~\mbox{for some $i$, $1 \leq i \leq q$}\}
$$
be the ``up-set'' generated by the $D_i$'s. Let $\tilde{p}$ and $p$ satisfy $0 < \tilde{p} \leq p \leq 1$. We have
\begin{equation} \label{eq-correlation_inq}
\frac{\sum_{X \in {\mathcal F}} \tilde{p}^{|X|}(1-\tilde{p})^{m-|X|}}{\sum_{X \in {\mathcal F}} p^{|X|}(1-p)^{m-|X|}} \geq \left(\frac{\tilde{p}}{p}\right)^u.
\end{equation}
\end{lem}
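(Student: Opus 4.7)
The plan is to interpret both sides probabilistically and produce a coupling. The denominator equals $\mathbf{P}_{p}[X\in\mathcal{F}]$, where $X\subseteq\Omega$ is the random set obtained by including each $\omega\in\Omega$ independently with probability $p$; likewise the numerator equals $\mathbf{P}_{\tilde p}[\tilde X\in\mathcal{F}]$. Thus \eqref{eq-correlation_inq} is equivalent to the assertion
\[
\mathbf{P}_{\tilde p}[\tilde X\in\mathcal{F}]\ \geq\ (\tilde p/p)^{u}\,\mathbf{P}_{p}[X\in\mathcal{F}].
\]

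To establish this, I would couple the two processes on one probability space. First sample $X$ as $\mathrm{Bernoulli}(p)^{m}$. Then, independently for each $\omega\in X$, keep $\omega$ in $\tilde X$ with probability $\tilde p/p\in(0,1]$ and drop it otherwise; set $\tilde X\cap(\Omega\setminus X)=\emptyset$. A direct computation shows each $\omega\in\Omega$ lies in $\tilde X$ independently with probability $p\cdot(\tilde p/p)=\tilde p$, so $\tilde X$ has the correct marginal $\mathrm{Bernoulli}(\tilde p)^{m}$ law, and by construction $\tilde X\subseteq X$.

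Next, for each $X\in\mathcal{F}$ fix some index $i(X)$ with $D_{i(X)}\subseteq X$. Conditioning on the first stage, whenever $X\in\mathcal{F}$ we have
\[
\mathbf{P}\bigl[D_{i(X)}\subseteq\tilde X\,\bigm|\,X\bigr]=(\tilde p/p)^{|D_{i(X)}|}\ \geq\ (\tilde p/p)^{u},
\]
using $\tilde p/p\leq 1$ and $|D_{i(X)}|\leq u$. Since $\mathcal{F}$ is upward closed, $D_{i(X)}\subseteq\tilde X$ forces $\tilde X\in\mathcal{F}$, so summing the conditional bound gives
\[
\mathbf{P}[\tilde X\in\mathcal{F}]\ \geq\ \sum_{X\in\mathcal{F}}\mathbf{P}[X]\cdot(\tilde p/p)^{u}\ =\ (\tilde p/p)^{u}\,\mathbf{P}[X\in\mathcal{F}],
\]
which is the desired inequality.

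There is no serious obstacle here: the one mild subtlety is that the up-closedness of $\mathcal{F}$ runs in the \emph{wrong} direction for a direct comparison, since $\tilde X\subseteq X$ only yields the implication $\tilde X\in\mathcal{F}\Rightarrow X\in\mathcal{F}$, not the converse we want. This is precisely why I would argue through a small certificate $D_{i(X)}\subseteq X$ of membership in $\mathcal{F}$ rather than through $X$ itself, and the hypothesis $|D_{i}|\leq u$ enters at exactly this step to control the probabilistic cost of forcing the certificate to survive the thinning from $X$ to $\tilde X$.
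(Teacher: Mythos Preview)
Your proof is correct and is essentially identical to the paper's argument: the paper also couples by sampling $A\sim\mathrm{Bernoulli}(p)$ and an independent $B\sim\mathrm{Bernoulli}(\tilde p/p)$, identifies the numerator with $\Pr(A\cap B\in\mathcal{F})$, fixes a certificate $D_X\subseteq X$ for each $X\in\mathcal{F}$, and bounds $\Pr(A\in\mathcal{F},\,D_A\subseteq B)\geq(\tilde p/p)^u\Pr(A\in\mathcal{F})$. Your thinning description and the paper's intersection $A\cap B$ are the same construction in slightly different notation.
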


\begin{proof}
With each $X \in {\mathcal F}$ associate (arbitrarily) a set $D_X$ satisfying $D_X \subseteq X$ and $D_X \in \{D_1, \ldots, D_q\}$ (such a set exists by the definition of ${\mathcal F}$).

Let $A \subseteq \Omega$ be selected by independent Bernoulli trials with success probability $p$, and, independently, let $B \subseteq \Omega$ be selected by independent Bernoulli trials with success probability $\tilde{p}/p$. Observe that
\begin{equation} \label{eq-first_random_selection}
\Pr(A \in {\mathcal F}) = \sum_{X \in {\mathcal F}} p^{|X|}(1-p)^{m-|X|}
\end{equation} 
and
\begin{equation} \label{eq-combination_random_selection}
\Pr(A \cap B \in {\mathcal F}) = \sum_{X \in {\mathcal F}} \tilde{p}^{|X|}(1-\tilde{p})^{m-|X|},
\end{equation}
with (\ref{eq-combination_random_selection}) holding by independence and because for each $\omega \in \Omega$, $\Pr(\omega \in A \cap B)=\Pr(\omega \in A)\Pr(\omega \in B)$. 
  
Now consider the two events
$$
E_1 = \{A \in {\mathcal F}~\mbox{and}~D_A \subseteq B\}
$$  
and
$$
E_2 = \{A \cap B \in {\mathcal F}\}.
$$
If $A \in {\mathcal F}$ and $D_A \subseteq B$ then $D_A \subseteq A \cap B$, so that $A \cap B \in {\mathcal F}$. It follows that $E_1 \subseteq E_2$ and 
\begin{equation} \label{eq-containment}
\Pr(E_1) \leq \Pr(E_2).
\end{equation}
Using independence we have
\begin{multline*}
\Pr(E_1) = \sum_{X \in {\mathcal F}} \Pr(A=X~\mbox{and}~D_X \subseteq B) = \sum_{X \in {\mathcal F}} \Pr(A=X)\left(\frac{\tilde{p}}{p}\right)^{|D(X)|}  \\
 \geq \Pr(A \in {\mathcal F})\left(\frac{\tilde{p}}{p}\right)^u.
\end{multline*}
Combining with (\ref{eq-first_random_selection}), (\ref{eq-combination_random_selection}) and (\ref{eq-containment}) we get (\ref{eq-correlation_inq}).
\end{proof}

\medskip

We are now ready to prove Lemma \ref{lem-tail}. We follow Matou\v{s}ek's approach in \cite[Section 6.5]{mbook}, but add an additional argument. 
\begin{proof}[Proof of Lemma \ref{lem-tail}] 
We start by establishing
\begin{equation} \label{claim1}
{\bf E}_\mu\left(\left|{\mathcal T}(S)\right|\right) = O(r^d),
\end{equation}
which gives Lemma \ref{lem-tail} for $t \leq 1$.
To see (\ref{claim1}) note that \textbf{(C2)} yields ${\bf E}_\mu\left(\left|{\mathcal T}(S)\right|\right) \leq C{\bf E}_\mu\left(|S|^d\right)+1$. Now $|S|=X_1+\ldots + X_n$ where the $X_i's$ are independent Bernoulli random variables each with parameter $p=r/n$. We claim that for all $d \geq 1$ we have
\begin{equation} \label{eq-bin-moment-bound}
{\bf E}_\mu(|S|^d) \leq (r+d)^d.
\end{equation}
(from which (\ref{claim1}) immediately follows; note that we can drop the $+1$ since $r \geq 1$).

To see (\ref{eq-bin-moment-bound}), note first that by linearity we have 
$$
{\bf E}_\mu(|S|^d) = \sum_{(i_1,i_2,\ldots,i_d) \in \{1, \ldots, n\}^d} {\bf E}(X_{i_1}X_{i_2}\cdots X_{i_d}).
$$
Let $a_k$ be the number of tuples $(i_1,i_2,\ldots,i_d) \in \{1, \ldots, n\}^d$ such that $|\{i_1,i_2,\ldots,i_d\}|=d-k$. By independence of the $X_i$, and the fact that $X_i^\ell$ has the same distribution as $X_i$ for any integer $\ell \geq 1$ we have
\begin{equation} \label{eq-bin-moment-bound_int}
{\bf E}_\mu(|S|^d) = \sum_{k=0}^d a_k p^{d-k}.
\end{equation}
We claim that 
\begin{equation} \label{eq-bin-moment-bound_int2}
a_k \leq \binom{d}{k}d^kn^{d-k}.
\end{equation}
Inserting into (\ref{eq-bin-moment-bound_int}) and using the binomial theorem together with $np=r$, this gives (\ref{eq-bin-moment-bound}).

To see (\ref{eq-bin-moment-bound_int2}) note that we overcount $a_k$ by first specifying $d-k$ indices from $\{1,\ldots,d\}$ on which the $i_j$'s are all different from each other ($\binom{d}{d-k}=\binom{d}{k}$ choices), then choosing values for these $i_j$'s ($n(n-1)\cdots(n-(d-k)+1) \leq n^{d-k}$ choices), and finally choosing values for the remaining indices ($(d-k)^k \leq d^k$ choices, since these indices are all constrained to lie among the $d-k$ distinct indices chosen initially). It follows that $a_k \leq \binom{d}{k} n^{d-k} d^k$, as claimed.

(We note that in the case $d=2$ things are considerably easier: we have
$$
|S|^2 = \sum_{i=1}^n X_i^2 + 2\sum_{1 \leq i < j \leq n} X_iX_j
$$
so
\begin{align*}
{\bf E}_\mu(|S|^2) & =  \sum_{i=1}^n {\bf E}(X_i^2) + 2\sum_{1 \leq i < j \leq n} {\bf E}(X_iX_j) \\
& =  np + n(n-1)p^2 \\
& \leq  np + n^2p^2 = r^2+r.)
\end{align*}

We assume from now on that $t\geq 1$. For $\Delta\in {\rm Reg}$ denote by $p(\Delta)$ the probability that
$\Delta$ appears in ${\mathcal T}(S)$,  i.e.
$$
p(\Delta)=  \mu(\{ S\subseteq H \colon \Delta\in {\mathcal T}(S)\})=\sum_{\Delta\in {\mathcal T}(S)} \mu(S).
$$
Let ${\rm Reg}_{\geq t}=\{ \Delta\in  {\rm Reg} \colon |{\mathcal I}_H(\Delta)|\geq tn/r\}$. By linearity of expectation we have
\begin{equation} \label{eq:2}
{\bf E}_\mu\left(\left|{\mathcal T}(S)_{\geq t}\right|\right)= \sum_{\Delta\in {\rm Reg}_{\geq t}} p(\Delta).
\end{equation}

Now set $\tilde{p}=p/t$ and let $\tilde{\mu}$ be the distribution associated with selection from $H$ by independent Bernoulli trials with success probability $\tilde{p}$. By (\ref{claim1}) we have 
\begin{equation} \label{eq:3}
{\bf E}_{\tilde{\mu}}\left(\left|{\mathcal T}(S)\right|\right)=O(r^d/t^d).
\end{equation}
Also, as in \eqref{eq:2} we have   
\begin{multline}\label{eq:4}
{\bf E}_{\tilde{\mu}}\left(\left|{\mathcal T}(S)\right|\right)  =  \sum_{\Delta\in {\rm Reg}}
\tilde{p}(\Delta) 
 \geq  \sum_{\Delta\in {\rm Reg}_{\geq t}} \tilde{p}(\Delta) \\
 =  \sum_{\Delta\in {\rm Reg}_{\geq t}} p(\Delta)\frac{\tilde{p}(\Delta)}{p(\Delta)}  
 \geq   \min\left\{ \frac{\tilde{p}(\Delta)}{p(\Delta)}\colon \Delta\in
      {\rm Reg}_{\geq t}\right\} \sum_{\Delta\in {\rm Reg}_{\geq t}} p(\Delta)  \\
 =  \min\left\{ \frac{\tilde{p}(\Delta)}{p(\Delta)}\colon \Delta\in
      {\rm Reg}_{\geq t}\right\} {\bf E}_\mu\left(\left|{\mathcal T}(S)_{\geq t}\right|\right). 
\end{multline}
  
We now estimate from below the quantity $\tilde{p}(\Delta)/p(\Delta)$ for $\Delta \in {\rm Reg}_{\geq t}$. Fix such a $\Delta$ and let ${\mathcal F}(\Delta)$ be the up-set on ground set $H \setminus {\mathcal I}_H(\Delta)$ generated by ${\mathcal D}(\Delta)$.
Using \textbf{(C3)} we see that
$$
p(\Delta) = (1-p)^{|{\mathcal I}_H(\Delta)|}\sum_{X \in {\mathcal F}(\Delta)} p^{|X|}(1-p)^{|H \setminus {\mathcal I}_H(\Delta)|-|X|}
$$
with an analogous expression for $\tilde{p}(\Delta)$. Recalling $\tilde{p}/p=1/t$ and that defining sets have size at most $s$, an application of Lemma \ref{lem-set_system_corr} immediately yields
\begin{align}
\frac{\tilde{p}(\Delta)}{p(\Delta)} & \geq \frac{(1-\tilde{p})^{|{\mathcal I}_H(\Delta)|}}{(1-p)^{|{\mathcal I}_H(\Delta)|}}\left(\frac{1}{t}\right)^s \nonumber \\
& \geq  \left(\frac{1-\tilde{p}}{1-p}\right)^{tn/r} \left(\frac{1}{t}\right)^s  \nonumber \\
& \geq  \left(\frac{e^{-c\tilde{p}}}{e^{-p}}\right)^{tn/r} \left(\frac{1}{t}\right)^s  \nonumber \\
& =  e^{t-c} t^{-s}, \label{eq:5}
\end{align} 
with the second inequality using $(1-\tilde{p})/(1-p) \geq 1$ and $|{\mathcal I}_H(\Delta)| \geq tn/r$, and the third inequality using the standard bound $1- p\leq e^{- p}$ (valid for all real $p$). In the third inequality we also use that for $0 \leq \tilde{p} \leq 1-\varepsilon$ (which certainly holds, since $\tilde{p} \leq p \leq 1-\varepsilon$) we have
$1-\tilde p  \geq e^{-c\tilde{p}}$ for some sufficiently large $c=c(\varepsilon)$ ($c=\log(1/\varepsilon)/(1-\varepsilon)$ will do).

Inserting (\ref{eq:5}) into (\ref{eq:4}) and combining with (\ref{eq:3}) we finally get
$$
{\bf E}_\mu\left(\left|{\mathcal T}(S)_{\geq t}\right|\right) \leq t^s e^{c-t} O(r^d/t^d) \leq C2^{-t}r^d
$$
for sufficiently large $C$. 
\end{proof}

%
%
%

\section{Optimal distal cell decomposition on the plane in o-minimal expansions of fields}
\label{sec: omin-cell-decomp}

Our goal in this section is to prove the following theorem.
\begin{thm}
  \label{thm:cell-dec}
Let $\CM$ be an o-minimal expansion of a real closed field. 
Then any  formula $\varphi(x;y)$ with $|x|=2$ admits a distal cell decomposition $\CT$ with 
$|\CT(S)| = O(|S|^2)$.
\end{thm}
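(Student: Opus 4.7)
The plan is to build, for each finite $S \subseteq M^{|y|}$ of size $n$, a vertical decomposition of $M^2$ of size $O(n^2)$, modelled on the classical semialgebraic vertical decomposition of an arrangement of curves. First I would apply the uniform $o$-minimal cell decomposition to $\varphi(x_1,x_2;y)$ with $x_2$ distinguished as the last variable, obtaining finitely many definable functions $g_1(x_1;y), \ldots, g_m(x_1;y)$ (on definable subdomains) such that the truth value of $\varphi(a_1,a_2;b)$ is determined by the sign pattern of $\{a_2 - g_i(a_1;b)\}_i$ together with the domain in which $a_1$ lies. After further cell-decomposing in the $x_1$-direction we may assume each $g_i(\cdot;b)$ is continuous and $x_1$-monotone, so every $b \in S$ contributes $O(1)$ such arcs. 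Thus the collection $\mathcal{C}_S$ of relevant arcs has size $O(n)$, and it suffices to produce a distal cell decomposition for the family of these arcs.

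Next I would form the arrangement of $\mathcal{C}_S$ and vertically decompose it. Let $V(\mathcal{C}_S)$ consist of the endpoints of arcs together with all pairwise intersection points; by uniform finiteness of fibres in $o$-minimal structures (any two arcs meet in $O(1)$ points) we have $|V(\mathcal{C}_S)| = O(n^2)$. From each vertex I would draw the vertical segment upward until it hits the first arc of $\mathcal{C}_S$ above it (or $+\infty$), and similarly downward; this adds $O(n^2)$ vertical walls and decomposes $M^2$ into $O(n^2)$ open pseudo-trapezoidal cells, each bounded above and below by an arc of $\mathcal{C}_S$ and on the left and right by pieces of vertical walls. Including the $O(n^2)$ lower-dimensional pieces (arcs between consecutive vertices, wall segments, and vertices themselves), one obtains a covering $\CT(S)$ of $M^2$ of size $O(n^2)$, and by construction no $\varphi(x;b)$ with $b \in S$ crosses any piece of it.

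Finally, I would verify the definability conditions of Definition \ref{def: def cell decomp}. Each cell of $\CT(S)$ is determined by at most a bounded number of parameters from $S$: a pseudo-trapezoid needs only its top and bottom arcs together with the (at most four) arcs used to pin down the vertex-endpoints of its left and right vertical walls, and lower-dimensional pieces use fewer parameters still. Enumerating these combinatorial types gives a finite family of formulas $\Psi(x;\bar y)$ defining all potential cells, and I would take $\CI(\Delta)$ to be the uniformly-definable set of $b \in M^{|y|}$ such that $\varphi(x;b)$ either crosses $\Delta$ or encroaches on any of the arcs bounding $\Delta$. I expect the main obstacle to be the uniform definition of the vertical walls: the ``first arc above a vertex'' depends on all of $\mathcal{C}_S$, so each wall endpoint must be realised as one of finitely many a-priori possibilities, with the correct choice enforced through $\CI(\Delta)$ excluding interference from other arcs in $S$. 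Arranging this bookkeeping without inflating the count past $O(n^2)$ is the core of the argument.
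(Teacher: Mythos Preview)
Your proposal is correct and follows essentially the same route as the paper: reduce $\varphi$ to a finite family of ``horizontal'' curve-graphs and vertical boundary lines over each parameter, form the $O(n^2)$ pairwise (proper) intersection points, extend vertical segments from each to the nearest curve above and below, and verify that the resulting pseudo-trapezoidal pieces are uniformly $\Psi$-definable with the ``first arc above/below'' ambiguity resolved by pushing the obstruction into $\CI(\Delta)$. The paper separates the full vertical lines $x_2 = u_i(s)$ (where the truth-pattern or continuity breaks) from the short vertical wall segments at curve intersections, and counts the $2$-cells via a ``left lower corner'' argument rather than appealing directly to arrangement complexity, but these are organisational rather than substantive differences.
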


Towards proving the theorem, we fix a formula $\varphi(x;y)$ with $|x|=2$ (and often we will write
$x$ as $(x_1,x_2)$).

We first construct a finite set of
formulas $\Phi(x;y)$ such that for any $s\in M^{|y|}$ the set
$\varphi(M;s)$ is a Boolean combination of $\Phi(x;s)$-definable sets,
and formulas in $\Phi(x;y)$ have a very simple form. Then we
construct a definable cell decomposition $\CT$ for
$\Phi(x;y)$ (hence also for $\varphi$)   with $|\CT(S)|=O(|S|^2)$.   

\medskip


Using o-minimality and definable choice  we can find  definable functions
$h_1,\dotsc,h_k\colon M\ttimes M^{|y|}\to M$ such that 
\[
h_1(a,s)\leq h_2(a,s)\leq \dotsb \leq h_k(a,s) \text{ for all } a\in
M,  s\in M^{|y|}, \]
and for all $a\in M, s\in M^{|y|}$ and $i=0,\dotsc,k$ we have 
\[ h_i(a,s) < x_1,x_1'  < h_{i+1}(a,s) \rightarrow [\varphi(x_1;a,s)\leftrightarrow
\varphi(x_1';a,s)], \]
 where for convenience we let $h_0(a,s)=-\infty$ and $
 h_{k+1}(a,s)=+\infty$.

\medskip

At this point we have that for  a fixed  $i=0,\dotsc,k$ for all $a\in
M$, $s\in M^{|y|}$ the truth value  of $\varphi(x_1;a, s)$ is constant on
the interval  $ h_i(a,s) < x_1 < h_{i+1}(a,s)$, but this constant truth value may depend on $a$. 
We need to partition $M$ into pieces where this truth
value does not depend on $a$.

For $a,a'\in M$ and $s\in M^{|y|}$ we define the relation $a
\sim_s a'$ as
\begin{multline*} a\sim_s a' \text{ iff  for all } i=0,\dotsc,k
 \\ \text{ and any } h_i(a,s) < x_1
< h_{i+1}(a,s),  \,  h_i(a',s) < x_1'  < h_{i+1}(a',s) \\\text{ we have } 
 \varphi(x_1;a,s)\leftrightarrow
\varphi(x_1';a',s).
\end{multline*}
Clearly $\sim_s$ is an equivalence relation on $M$ with at most
$2^{k+1}$-classes  uniformly definable in
terms of $s$. Using $o$-minimality and definable choice, we can find definable functions
$u_i\colon M^{|y|}\to M$, $i=1,\dotsc,l$ with $u_1(y)\leq u_2(y) \leq \dotsb\leq
u_l(y)$ such that for all $s\in M^{|y|}$ and $i=0,\dotsc,l$ we have 
$$ u_i(s)< x_2,x_2' <u_{i+1}(s) \rightarrow x_2\sim_s x_2',$$
where again for convenience we use $u_0(y)=-\infty$ and
$u_{l+1}(y)=+\infty$. 

We would prefer that for $s\in M^{|y|}$, each of the 
functions $x_2\mapsto h_i(x_2,s)$  was continuous. 
For $k \in \mathbb{N}$, we will write $[k]$ to denote the set $\{1,2, \ldots, k\}$. Since every definable function is piecewise continuous, we can further
partition $M$  and in addition require that for any $i=0,\dotsc,l$,
 $j\in[k]$  and
every $s\in M^{|y|}$ the function $x_2\mapsto h_j(x_2,s)$ is continuous on
the  interval $u_i(s) <x_2 < u_{i+1}(s)$. 
\medskip  

We take $\Phi(x;y)$ to be the following set of formulas (recall that $x=(x_1,x_2)$): 
\begin{gather*}
\{ x_2 = u_i(y) \colon i\in [l]\} \cup \{ x_2 < u_i(y) \colon i  \in [l] \}  \\
\cup \,  \{ x_2 > u_i(y) \colon i\in [l] \}\cup \{ x_1=h_i(x_2,y)\colon i\in [k]\} \\
\cup \, \{ x_1<h_i(x_2,y)\colon i\in [k]\} \cup \{ x_1>h_i(x_2,y)\colon i\in [k]\}. 
\end{gather*}

It is not hard to see that for any $s\in M^{|y|}$ the set $\varphi(M;s)$
is a Boolean combination of $\Phi(x;s)$-definable sets. 

We now
proceed with a construction of a  definable cell
decomposition for $\Phi(x;y)$. \\

Geometrically we view $M^2$ as $(x_1,x_2)$-plane, with $x_1$ being on the
vertical axis and $x_2$ on horizontal. Then 
$\Phi(x;S)$-definable sets partition the plain by  vertical lines
$x_2=u_i(s)$ and ``horizontal'' ``curves'' $x_1=h_j(x_2,s)$.

Unfortunately we cannot use complete $\Phi$-types over $S$ as
$\CT(S)$.  Since $S$ is finite every complete $\Phi$-type is equivalent
to a formula; however in general we cannot get uniform definability.

Consider a simple example of a partition of a plane by straight lines,
i.e.~the case when we don't have functions $u_i$ and 
have only one $h(x_2,a,b)$ defining the straight  
lines $x_1=ax_2+b$.  In the example below all points in the gray area have the same
$\Phi$-type, but we need at least $5$ lines to describe the region; and in general, this
number may be as big as one wants.\\

\begin{tikzpicture}[scale=0.4]
\draw[->,ultra thick] (-0.5,0.5)--(20,0.5) node[right]{$x_2$};
\draw[->,ultra thick] (0.5,-0.5)--(0.5,10) node[left]{$x_1$};
\draw[thick] (0,2)--(20,2);
\draw[thick] (0,4)--(6,10);
\draw[thick] (0,6)--(8,0);
\draw[thick] (0,7)--(20,7);
\draw[thick] (8,10)--(20,4);
\draw[thick] (0, 0.66)--(20,7.5); 
\fill[gray!50] (1.2,5.15)--(3,7) --(14,7)--(15.85,6.1)--(4.9,2.38)--cycle;
\end{tikzpicture}  

We could solve this problem by using also vertical lines  through all
points of intersections, as shown
below, but then the size of the partition would be $O(|S|^3)$. \\

\begin{tikzpicture}[scale=0.4]
\draw[->,ultra thick] (-0.5,0.5)--(20,0.5) node[right]{$x_2$};
\draw[->,ultra thick] (0.5,-0.5)--(0.5,10) node[left]{$x_1$};
\draw[thick] (0,2)--(20,2);
\draw[thick] (0,4)--(6,10);
\draw[thick] (0,6)--(8,0);
\draw[thick] (0,7)--(20,7);
\draw[thick] (8,10)--(20,4);
\draw[thick] (0, 0.66)--(20,7.5); 
\draw[dashed] (1.15,0) --  (1.15,10);
\draw[dashed] (3,0) -- (3,10);
\draw[dashed] (4.9,0) -- (4.9,10);
\draw[dashed] (5.4,0) -- (5.4,10);
\draw[dashed] (14,0) -- (14,10);
\draw[dashed] (15.85,0)--(15.85, 10);
\draw[dashed] (18.5,0)--(18.5, 10);
\end{tikzpicture}  

Using the idea of ``vertical decomposition''  from \cite{13}
we  add only vertical line segments where they are
needed, i.e. from an intersection point to the first line above (or plus infinity) and
the first line below (or minus infinity), as in the following picture.

\begin{tikzpicture}[scale=0.4]
\draw[->,ultra thick] (-0.5,0.5)--(20,0.5) node[right]{$x_2$};
\draw[->,ultra thick] (0.5,-0.5)--(0.5,10) node[left]{$x_1$};
\draw[thick] (0,2)--(20,2);
\draw[thick] (0,4)--(6,10);
\draw[thick] (0,6)--(8,0);
\draw[thick] (0,7)--(20,7);
\draw[thick] (8,10)--(20,4);
\draw[thick] (0, 0.66)--(20,7.5); 
\draw[dashed] (1.15,2) --  (1.15,7);
\draw[dashed] (3,3.8) -- (3,10);
\draw[dashed] (4.9,2) -- (4.9,7);
\draw[dashed] (5.4,0) -- (5.4,2.55);
\draw[dashed] (14,5.5) -- (14,10);
\draw[dashed] (15.85,2)--(15.85, 7);
\draw[dashed] (18.5,4.69)--(18.5, 10);
\end{tikzpicture}  

Our  general case is slightly more complicated since the functions
$x_2\mapsto h_i(x_2,s)$ are not linear and even not continuous, just
piecewise continuous, so their graphs may intersect without one crossing
another.  \\

For $i\in [l]$ and $s\in M^{|y|}$ we will denote by $\hat u_i(s)$ the
corresponding vertical line 
\[ \hat u_i(s) :=\{ (x_1,x_2)\in M^2 \colon x_2=u_i(s)\},
 \]
and also for $i\in [k]$ and $s\in M^{|y|}$ we will denote by $\hat h_i(s)$
the ``curve''
\[ \hat h_i(s) :=\{ (x_1,x_2)\in M^2 \colon x_1=h_i(x_2,s)\}. \]

For $i,j\in [k]$, $s_1,s_2\in M^{|y|}$ and $(a,b)\in M^2$ we say that 
$\hat h_i(s_1)$ and $\hat h_j(s_2)$ \emph{properly intersect} at
$(a,b)$  if $(a,b)\in \hat h_i(s_1)\cap \hat h_j(s_2)$ and 
$\hat h_i(s_1),\hat h_j(s_2)$ have different germs at $(a,b)$.  
Formally it means that 
$a=h_i(b,s_1)=h_j(b,s_2)$  and for any $\varepsilon>0$ there is
$b'\in (b-\varepsilon, b+\varepsilon)$ with $h_i(b',s_1)\neq
h_j(b',s_2)$.  We will denote by $\hat h_i(s_1) \sqcap  \hat h_j(s_2)$
the set of all points $(a,b)\in M^2$ where   $\hat h_i(s_1)$ and  $
\hat h_j(s_2)$ intersect properly.  It is easy to see using
o-minimality that the set
$\hat h_i(s_1) \sqcap  \hat h_j(s_2)$
is finite and there is $N_l\in \NN$ such that 
$|\hat h_i(s_1) \sqcap \hat  h_j(s_2)|\leq N_l$ for all $i,j\in [k]$
and $s_1,s_2\in M^{|y|}$. Also all points in $\hat  h_i(s_1)\sqcap  \hat
h_j(s_2)$ are definable over $s_1,s_2$, i.e. there are definable
functions $f_{i,j}^m(y_1,y_2)$ with $m\in [N_l]$ such that for all
$s_1,s_2$ the set $\hat  h_i(s_1)\sqcap  \hat
h_j(s_2)$ is either empty or it is exactly $\{ f_{ij}^m(s_1,s_2) \colon
m\in [N_l] \}$.

\medskip
We will construct a definable cell decomposition $\CT(S)$ for 
$\Phi(x;y)$ as a union  of 5 families  of
cells: 
\begin{itemize}
\item $\CT_0(S)$ -- 0-dimensional cells, i.e. points; 
\item $\CT_1^u(S)$ -- 1-dimensional ``vertical'' cells;
\item $\CT_1^e(S)$ -- extra 1-dimensional vertical cells;
\item $\CT_1^h(S)$ -- 1-dimensional ``horizontal'' cells; 
\item $\CT_2(S)$ -- 2-dimensional cells. 
\end{itemize}
For each family $\CT_{\star}^\star(S)$ we will have
$|\CT_\star^\star(S)|=O(|S|^2)$, and also we will have  appropriate  $\Psi_\star^\star(x;\bar y)$
and $\CI_\star^\star(\Delta)$
so that 
\[\CT^\star_\star(S)=\{ \Delta \colon \Delta \text{ is $\Psi_\star^\star(x;S)$-definable and }
\CI_\star^\star(\Delta) \cap S =\emptyset\}.\]

In each case instead of defining the set of formulas $\Psi_\star^\star(x;\bar y)$, we describe  corresponding 
families  of $\Psi_\star^\star(x;S)$-definable sets, that we denote by $\Psi_\star^\star(S)$.

\medskip

\textbf{ The family $\mathbf{\CT_0(S)}$.}  We take $\CT_0(S)$ to be the
set of all points of intersections of vertical lines $\hat u_i(s)$ and
curves $\hat h_j(s')$ together with all points where 
curves $\hat h_i(s)$ and $\hat h_j(s')$ intersect properly. I.e.,
\begin{gather*}
\CT_0(S)=\cup \{ (\hat u_i(s)\cap \hat  h_j(s') \colon 
i\in [l];  j\in [k]; s,s'\in S\}\,   \\ 
\cup\, \{ \hat h_i(s) \sqcap \hat  h_j(s') \colon i,j\in [k]; 
s,s'\in S \}.
\end{gather*}

We take $\Psi_0(S) :=\CT_0(S)$ and $\CI_0(\Delta) :=\emptyset$. 

It is easy to see that  $\Psi_0(S)$ is uniformly definable. 

We also have $|\CT_0(S)|\leq k l|S|^2 + N_lk^2|S|^2 =O(|S|^2)$.

\medskip

\textbf{ The set $\mathbf{\CT_1^u(S)}$.}
For fixed  $i\in [l]$ and $s\in S$ let 
$I_i^s$ be the set of all definably connected components of $\hat
u_i(s) \setminus \CT_0(S)$.

Since 
\[ \hat u_i(s) \cap \CT_0(S)=\{ \hat u_i(s)\cap \hat h_j(s')\colon 
j\in [k], s'\in S\}, \]
we have $|I_i^s|\leq (k+1)|S|$, and every $\Delta\in  I_i^s$ has form 
\[ \Delta=\{ (x_1,x_2) \in M^2 \colon x_2=u_i(s); h_j(s_1) < x_1 < h_{j'}(s_2) \}, \]
for some $j,j'\in \{ 0,\dotsc, k+1\}$, and  $s_1,s_2\in S$. 

We take $\CT^u_1 (S)$ to be the union of all  $I_i^s$ for $i\in
[l]$ and $s\in S$.  Clearly $|\CT_1^u(S)|\leq l(k+1)|S|^2 =O(|S|^2)$.

We take $\Psi_1^u(S)$ to be the set of all vertical lines segments 
of the form 
$\{ (x_1,x_2) \in M^2 \colon x_2=u_i(s); h_j(s_1) < x_1 < h_{j'}(s_2)
\}$, for $i\in [l]$, $j,j'\in \{ 0,\dotsc, k+1\}$, $s,s_1,s_2\in S$.
For $\Delta\in \Psi_1^u(S)$  we take $\CI_1^u(\Delta) :=\{ s\in M^{|y|} \colon 
\Phi(x;s) \text{ crosses } \Delta\}$.

It is not hard to see that $\Psi_1^u$ and $\CI_1^u$ are uniformly
definable and $\CT_1^u(S)= \{ \Delta\in \Psi_1^u(S) \colon
\CI_1^u(\Delta)\cap S = \emptyset\}$.

\medskip
\textbf{ The set $\mathbf{\CT_1^e(S)}$.} For each point where two horizontal curves  intersect properly we add two vertical
line segments: one from the point to the curve above (or to plus infinity if there is no curve above) and one to the
curve below (or to minus infinity if there is no curve below).

Let  $i,j\in [k]$, $s,s_1\in S$ and $p=(p_1,p_2)\in \hat h_i(s) \sqcap
\hat h_j(s_1)$.

Let \[p^+ :=\inf \{ h_m(p_2,s') \colon m=1,\dotsc,k+1; s'\in S;
h_m(p_2,s')>p_1\},\]
and  
\[p^- :=\sup\{ h_m(p_2,s') \colon m=0,\dotsc,k; s'\in S;
h_m(p_2,s')<p_1\}.\]

We define $I_p^+ :=\{ (x_1,x_2)\in M^2 \colon x_2=p_2;\, p_1<x_1<p^+
\}$, $I_p^- :=\{ (x_1,x_2)\in M^2 \colon x_2=p_2;\, p^-<x_1<p_1
\}$; and take 
\[ \CT_1^e(S) := \{ I_p^+, I_p^- \colon p\in \hat h_i(s) \sqcap
\hat h_j(s_1); \, i,j\in [k]; \,  s,s_1\in S\}. \]

Obviously $|\CT_1^e(S)| \leq 2N_l k^2|S|^2 =O(|S|^2)$.

We take $\Psi_1^e(S)$ to be the family of all sets of the form 
\[ \{ (x_1,x_2)\in M^2 \colon x_2=p_2;\, p_1<x_1< h_m(p_2,s') 
\}\] 
for all $i,j\in [k]$, $m\in \{ 1,\dotsc,k+1\}$, $s,s_1,s'\in S$, and
$p=(p_1,p_2)\in   \hat h_i(s) \sqcap
\hat h_j(s_1)$; and of the form 
\[ \{ (x_1,x_2)\in M^2 \colon x_2=p_2;\, h_m(p_2,s')<x_1<  p_1
\}\] 
for all $i,j\in [k]$, $m\in \{ 0,\dotsc,k\}$, $s,s_1,s'\in S$, and
$p=(p_1,p_2)\in   \hat h_i(s) \sqcap
\hat h_j(s_1)$. It is not hard to see that $\Psi(S)$ is uniformly
definable. 

For $\Delta\in \Psi_1^e(S)$ we take $\CI_1^e(\Delta) :=\{ s\in M^{|y|} \colon
\Phi(x;s) \text{ crosses } \Delta\}$.  It is not hard to see
$\CI_1^e(\Delta)$ is uniformly
definable and $\CT_1^e(S)= \{ \Delta\in \Psi_1^e(S) \colon
\CI_1^e(\Delta)\cap S = \emptyset\}$.

\medskip

\textbf{ The set $\mathbf{\CT_1^h(S)}$.}
Given  $i\in [k]$ and $s\in S$, let $J_i^s$ be the set of all definably
connected components of $\hat h_i(s) \setminus \CT_0(S)$. 
It is easy to see that  
\begin{multline*}
 \hat h_i(s)\cap \CT_0(S)=\{ \hat h_i(s)\cap \hat u_j(s')\colon j\in
[l]; s'\in S \}  \\
\cup\, \{ \hat h_i(s)\sqcap \hat h_j(s')\colon j\in
[k]; s'\in S \}. 
\end{multline*}
In particular $|J_i^s|\leq (l+N_l k+1)|S|$. 

We take $\CT_1^h(S)$ to be the union of all $J_i^s$ for $i\in [k]$,
$s\in S$.   Clearly $|\CT_1^h (S)|\leq k(l+N_l k+1)|S|^2=O(|S|^2)$.

Given  $i\in [k]$, $s\in S$ and $s_1,s_2\in S$ let   
$\mathcal A_{i,s}[s_1,s_2]$  be the family of all sets of the form $\{ (x_1,x_2)\in \hat h_i(s); c_1<x_2 < c_2
  \},$ with 
\begin{multline*}
 c_1,c_2\in \{ u_j(s_1) \colon j\in [l]\}\\ 
\cup \,\{ p_2
  \colon (p_1,p_2) \in \hat h_i(s) \sqcap \hat h_j(s_2) \text{ for some }
  p_1\}\cup
\{ \pm\infty\}.
\end{multline*}

We take $\Psi_1^h(S)$ to be the union of all $\mathcal A_{i,s}[s_1,s_2]$ with $i\in
[k]$ and $s,s_1,s_2\in S$.
 It is not hard to see that 
$\Psi_1^h(S)$ is uniformly
definable and $\CT_1^h(S)= \{ \Delta\in \Psi_1^h(S) \colon
\CI_1^h(\Delta)\cap S = \emptyset\}$, where $\CI_1^h(\Delta)=\{ s\in M^{|y|} \colon
\Phi(x;s) \text{ crosses } \Delta\}$.

\medskip

\textbf{ The set $\mathbf{\CT_2(S)}$.} For the family $\CT_2(S)$ we
take all definably connected components of $M^2\setminus
(\CT_0(S)\cup
\CT_1^u(S)\cup
\CT_1^e(S)\cup
\CT_1^h(S))$.

Given  $i,j\in \{0,\dotsc,k+1\}$ , $s_1,s_2\in S$   and  $c_1<c_2\in
M\cup\{\pm\infty\}$ 
with $h_i(x_2,s_1)< h_j(x_2,s_2)$ for all $x_2\in (c_1,c_2)$,
let
$A_{i,s_1}^{j,s_2}(c_1,c_2)$  be the set 
\[ A_{i,s_1}^{j,s_2} (c_1,c_2) =\{ (x_1,x_2)\in M^2\colon c_1<x_2 < c_2;\,
h_i(x_2,s_1)< x_1< h_j(x_2,s_2)  \}.  \]

It is not hard to see that if  $\Delta\in \CT_2(S)$ then $\Delta=
A_{i,s_1}^{j,s_2} (c_1,c_2)$ for some $i,j\in\{0,\dotsc,k+1\}$,
 $s_1,s_2\in S$ and $c_1,c_2$  belonging to the following
set:
\begin{gather*}S_{i,s_1}^{j,s_2}=\{  u_{i'}(s') \colon i'\in \{ 0,\dotsc l+1\}; s'\in
  S\} \\ 
\cup\,\{ p_2 \colon (p_1,p_2)\in \hat h_i(s_1)\sqcap \hat h_{i'}(s') 
\text{ for some } i'\in [k], s'\in S, p_1\in M \} \\
\cup\,\{ p_2 \colon (p_1,p_2)\in \hat h_j(s_2)\sqcap \hat h_{i'}(s') 
\text{ for some } i'\in [k], s'\in S, p_1\in M \}. 
\end{gather*}

We take $\Psi_2(S)$ to be the family of all $A_{i,s_1}^{j,s_2}
(c_1,c_2)$, for all $c_1,c_2\in S_{i,s_1}^{j,s_2}$. 

It is not hard to see that $\Psi_2(S)$ is uniformly definable family,
and we have $\CT_2(S) \subseteq \Psi_2(S)$.

It is also not hard to see that a set $\Delta\in \Psi_2(S)$ is in
$\CT_2(S)$ if and only if it is not crossed by $\Phi(x;S)$, and is also not crossed
by any line segment in $\CT_1^e(S)$.

Hence a set $\Delta=A_{i,s_1}^{j,s_2}(c_1,c_2)\in \Psi_2(S)$ \textbf{ is not in } $\CT_2(S)$ if and
only if there is $s\in S$ satisfying at least one of the following conditions. 
\begin{enumerate}[(C1)]
\item  $\Phi(x;s)$ crosses $\Delta$. 
\item There are  $i'\in [k]$ and 
  $(p_1,p_2)\in  \hat h_i(s_1) \sqcap \hat h_{i'}(s)$ with
  $c_1<p_2<c_2$.
\item There are  $i'\in [k]$ and 
  $(p_1,p_2)\in  \hat h_j(s_2) \sqcap \hat h_{i'}(s)$ with
  $c_1<p_2<c_2$.
\end{enumerate}

For $\Delta\in \Psi_2(S)$ we take $\CI_2(\Delta)$ to be the set of all
$s\in M^{|y|}$ satisfying any of the conditions $(C1)-(C3)$. It is not
hard to see that $\CI_2(\Delta)$ is uniformly definable and
$\CT_2(S)=\{ \Delta\in \Psi_2(S) \colon \CI_2(\Delta)\cap S
=\emptyset\}$. 

\medskip

We are left to check that $|\CT_2(S)|=O(|S|^2)$. 

Since $\CT_2(S)$ consists of definably connected components  of 
 $M^2\setminus
(\CT_0(S)\cup
\CT_1^u(S)\cup
\CT_1^e(S)\cup
\CT_1^h(S))$, any two $\Delta,\Delta'\in \CT_2(S)$ are either disjoint
or coincide, hence every $\Delta\in \CT_2(S)$ is completely determined
by its ``left lower corner'', i.e. if   $\Delta=A_{i,s_1}^{j,s_2}(c_1,c_2)$
and $\Delta'=A_{i,s_1}^{j',s'_2}(c_1,c'_2)$ are in $\CT_2$ then
$\Delta=\Delta'$.

We divide $\CT_2(S)$ into 4 disjoint families: 
\begin{itemize}
\item The family $F_1(S)$ of all $A_{i,s_1}^{j,s_2}(c_1,c_2)\in \CT_2(S)$
  with $c_1=-\infty$.

\item  The family $F_2(S)$ of all $A_{i,s_1}^{j,s_2}(c_1,c_2)\in \CT_2(S)$
  with $c_1=u_{i'}(s')$ for some $i' \in [l]$ and $s'\in S$.
\item  The family $F_3(S)$ of all $A_{i,s_1}^{j,s_2}(c_1,c_2)\in \CT_2(S)$ 
that are not in $F_2(S)$ and $(p_1,c_1)\in \hat h_i(s_1)\sqcap \hat
h_{i'}(s')$  for some $i' \in [k]$, $s'\in S$, and $p_1\in  M$. 

\item  The family $F_4(S)$ of all $A_{i,s_1}^{j,s_2}(c_1,c_2)\in \CT_2(S)$ 
that are not in $F_1(S)\cup F_2(S)\cup F_3(S)$.  In this case we have that $\{ (x_1,c_1) \colon h_i(c_1,s_1)< x_1
< h_j(c_1,s_2) \}\in \CT_1^e(S)$.
\end{itemize}

Every  $A_{i,s_1}^{j,s_2}(c_1,c_2)\in F_1(S)$ is completely determined by
$i$ and $s_1$, hence $|F_1(S)|\leq (k+1)|S|$ (we get $k+1$, since we
allow $i=0$). 

Every $A_{i,s_1}^{j,s_2}(c_1,c_2)\in F_2(S)$ is completely determined by
$i$, $s_1$, some $i'\in [l]$ and $s'\in S$. Hence 
$|F_2(S)|\leq (k+1)l|S|^2$.

Since $ \hat h_i(s_1)\sqcap \hat
h_{i'}(s')\leq N_l$ we have $|F_3(S)|\leq k^2N_l|S|^2$.

Finally,  each $A_{i,s_1}^{j,s_2}(c_1,c_2)\in F_4(S)$  is completely
determined by its ``left side'' $\{ (x_1,c_1) \colon h_i(c_1,s_1)< x_1
< h_j(c_1,s_2) \}$ that is in $\CT_1^e(S)$. Since
$|\CT_1^e(S)|=O(|S|^2)$, we also have $|F_4(S)|=O(|S|^2)$.

Therefore $|\CT_2(S)|=O(|S|^2)$. 

\medskip

Taking $\CT(S)=\CT_0(S)\cup
\CT_1^u(S)\cup
\CT_1^e(S)\cup
\CT_1^h(S)\cup \CT_2(S)$ we obtain a definable cell decomposition
for $\Phi(x;y)$ with $|\CT(S)|=O(|S|^2)$. 

\section{Planar Zarankiewicz's problem in distal structures}\label{sec: Zarank}

\subsection{Zarankiewicz's problem}

Zarankiewicz's problem in graph theory asks to determine the largest possible number of edges in a bipartite graph on a given number of vertices that has no complete bipartite subgraphs of a given size.

In \cite{zaran} the authors investigate Zarankiewicz's problem for semialgebraic graphs of bounded description complexity, a setting which in particular subsumes a lot of different incidence-type questions.

In particular, they prove the following upper bound on the number of edges (they have more general results in $\mathbb{R}^n$ for arbitrary $n$ as well, but here we will be only concerned with the ``planar'' case). 
\begin{fact} \cite[Theorem 1.1]{zaran}\label{fac: semialg zarank}
Let $E \subseteq \mathbb{R}^2 \times \mathbb{R}^2$ be a semi-algebraic relation such that $E$ has description complexity at most $t$ (i.e., $E$ can be defined as a Boolean combination of at most $t$ polynomial inequalities, with all of the polynomials involved of degree at most $t$). Then for any $k \in \mathbb{N}$ there is some constant $c = c(t,k)$ satisfying the following.

If $P, Q \subseteq \mathbb{R}^2$ with $|P|=m, |Q|=n$ are such that $E \cap (P \times Q)$ doesn't contain a copy of $K_{k,k}$ (the complete bipartite graph with both parts of size $k$), then
$$|E(P,Q)| \leq c \left( (mn)^{\frac{2}{3}} + m + n \right),$$

where $E(P,Q) = E \cap (P \times Q)$.

\end{fact}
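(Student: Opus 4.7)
The plan is to combine a planar cutting for the semi-algebraic family $\{E_q : q \in Q\}$ (where $E_q := \{p \in \RR^2 : (p,q) \in E\}$) with the K\H{o}v\'ari--S\'os--Tur\'an (KST) bound on $K_{k,k}$-free bipartite graphs, in the style of the classical Szemer\'edi--Trotter proof. Since each $E_q$ is semi-algebraic of complexity bounded in terms of $t$, for any parameter $1 < r < n$ the classical semi-algebraic cutting lemma (e.g.\ \cite{chaz}) produces a subdivision of $\RR^2$ into $O(r^2)$ cells $\Delta_1, \dots, \Delta_N$, each crossed by at most $n/r$ of the sets $E_q$.

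Fix such a cutting and, for each cell $\Delta_i$, set $m_i := |P \cap \Delta_i|$, $X_i := \{q : E_q \text{ crosses } \Delta_i\}$, and $C_i := \{q : \Delta_i \subseteq E_q\}$. Every incidence $(p,q) \in E(P,Q)$ is either a crossing incidence (if $E_q$ crosses the cell containing $p$) or a containing incidence (if that cell is entirely contained in $E_q$), so $|E(P,Q)| = I^{\mathrm{cross}} + I^{\mathrm{cont}}$ where $I^{\mathrm{cont}} = \sum_i m_i |C_i|$. A cell-by-cell application of KST, using that $E \cap ((P \cap \Delta_i) \times X_i)$ is $K_{k,k}$-free and $|X_i| \leq n/r$, bounds $I^{\mathrm{cross}}$ by $O_k\big(m(n/r)^{1-1/k} + nr\big)$.

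For $I^{\mathrm{cont}}$, the key observation is that the auxiliary bipartite graph $H$ on parts $\{\Delta_i : m_i \geq 1\}$ and $Q$, with $\Delta_i \sim q$ iff $\Delta_i \subseteq E_q$, is itself $K_{k,k}$-free: any $K_{k,k}$ in $H$ lifts to a $K_{k,k}$ in $E(P,Q)$ by picking one point of $P$ from each chosen cell. Splitting cells by whether $m_i \geq k$: in the first case $|C_i| \leq k-1$ (else $K_{k,k}$ reappears in $E(P,Q)$), contributing at most $(k-1)m$ in total; in the second case each edge of $H$ contributes less than $k$ to $I^{\mathrm{cont}}$, giving $O_k(r^{2-2/k}n + r^2)$ by KST applied to $H$ on parts of sizes $O(r^2)$ and $n$.

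Adding the crossing and containing estimates and choosing $r$ to balance the dominant terms (for $k = 2$, the natural choice is $r \sim m^{2/3} n^{-1/3}$), with lower-order residues absorbed into $m + n$, produces the claimed inequality with constant depending only on $t$ and $k$. The main technical obstacle is simultaneously applying $K_{k,k}$-freeness in both places---within each cell to bound $I^{\mathrm{cross}}$, and to the cell--set graph $H$ to bound $I^{\mathrm{cont}}$---and then carrying out the balancing carefully; for $k > 2$ the naive optimization yields a $k$-dependent exponent rather than the clean $2/3$, so one may want to substitute a polynomial partitioning step for the classical cutting in order to keep the exponent uniform in $k$ and absorb the $k$-dependence entirely into the constant.
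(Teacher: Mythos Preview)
This statement is quoted in the paper as a \emph{Fact} from \cite{zaran} and is not proved here; however, the paper's Theorem~\ref{thm: distal Zarank} generalizes it and the proof there shows what goes wrong with your outline.

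Your proposal has a genuine gap, and you have in fact already noticed it yourself: for $k>2$ the argument does not give the exponent $2/3$. The problem is not the cutting step --- swapping in polynomial partitioning would still produce $O(r^2)$ cells each crossed by $\leq n/r$ sets and would change nothing. The problem is that you are using K\H{o}v\'ari--S\'os--Tur\'an, whose exponent $1-1/k$ depends on $k$, at both the ``crossing'' and ``containing'' stages. In particular your crossing bound $O_k\bigl(m(n/r)^{1-1/k}+nr\bigr)$ already forces a $k$-dependent exponent after optimization; no choice of cutting can repair this.

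The fix is to replace KST by the shatter-function (VC-density) bound, stated in the paper as Fact~\ref{VCBoundOnEdges}. The point is that a semi-algebraic family in $\RR^2$ of description complexity $\leq t$ has shatter function $\pi_{\CF}(z)\leq c(t)z^2$ regardless of $k$, so a $K_{k,k}$-free bipartite graph with one side carrying such a family has at most $c_1(t,k)\bigl(mn^{1/2}+n\bigr)$ edges --- the exponent is governed by the dimension, not by $k$. Using this in place of KST (inside each cell for the crossing incidences, and dually for the containing incidences) makes the balancing give $2/3$ uniformly. The paper's own proof of the generalization (Theorem~\ref{thm: distal Zarank}) does exactly this, though it is organized as an iterative point-removal argument rather than a sum over cells: one finds a heavy cell, uses Fact~\ref{VCBoundOnEdges} inside it to locate a point of low degree among the crossers, observes that $K_{k,k}$-freeness caps the number of containers at $k-1$, removes the point, and repeats.
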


\begin{rem} This result is a natural generalization of the Szemer\'edi-Trotter theorem over $\RR$ \cite{szemeredi1983extremal}. Namely, if $P$ a set of points on the plane, $Q$ the dual of the lines (i.e.~lines are semi-algebraically coded by points in $\mathbb{R}^2$), and $E$ the incidence relationship (which is also clearly semialgebraic), then $E(P,Q)$ is $K_{2,2}$-free as any two distinct lines intersect in at most one point.
\end{rem}

We will give a common generalization of Fact \ref{fac: semialg zarank} and the semialgebraic ``points / planar curves'' incidence bound from \cite[Theorem 4]{pach1992repeated} to arbitrary definable families admitting a quadratic distal cell decomposition (e.g.~any definable family of subsets of $M^2$ in an o-minimal expansion of a field). To state the result, we first recall the notion of the VC-density of a partitioned formula (and refer to \cite{VCD1} for a detailed discussion).

\begin{defn}
\begin{enumerate}
\item Given a set $X$ and a family $\CF$ of subsets of $X$, the \emph{shatter function} $\pi_{\CF}: \mathbb{N} \to \mathbb{N}$ of $\CF$ is defined as
$$ \pi_{\CF}(n) := \max \{ |\CF \cap A| : A \subseteq X, |A| = n \}, $$
where $\CF \cap A = \{ S \cap A : S \in \CF \}$.

\item The \emph{VC-density} of $\CF$, or $\vc(\CF)$, is defined as the infimum of all real numbers $r$ such that $\pi_{\CF}(n) = O(n^r)$ (and $\vc(\CF) = \infty$ if there is no such $r$).

\item Given a formula $\varphi(x;y)$, possibly with parameters from $M$, we let $\mathcal{F}_{\varphi(x;y)} :=  \{ \varphi(M;b) : b \in M^{|y|}\}$ be the family of all $\varphi$-definable subsets of $M^{|x|}$.

\item We define the VC density of $\varphi$ to be $\vc(\varphi) := \vc(\mathcal{F}_{\varphi})$.

\item Given a formula $\varphi(x;y)$, we consider its dual formula $\varphi^*(y;x) := \varphi(x;y)$ obtained by interchanging the roles of the variables. It is easy to see then that the family $\mathcal{F}_{\varphi^*(y,x)} = \{ \varphi^*(M; a) : a \in M^{|x|}\}$ of subsets of $M^{|y|}$ is the dual set system for the family $\{ \varphi(M; b) : b \in M^{|y|}\}$ of subsets of $M^{|x|}$.
\end{enumerate}

\end{defn}

VC-density in various classes of NIP structures is investigated e.g. in \cite{VCD1, VCD2}, and the optimal bounds are known in some cases including the o-minimal structures.

\begin{fact} \cite[Theorem 6.1]{VCD1}\label{fac: vc bound in o-min}
Let $\CM$ be an o-minimal structure, and let $\varphi(x;y)$ be any formula. Then $\vc(\varphi^*) \leq |x|$.
\end{fact}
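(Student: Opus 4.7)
The plan is to prove $\vc(\varphi^*) \leq k$ (with $k := |x|$) by induction on $k$, using uniform parametric $o$-minimal cell decomposition at each stage. Observe first that $\pi_{\CF^*}(n)$ equals the maximum, over $B \subseteq M^{|y|}$ with $|B| = n$, of the number of distinct complete $\varphi$-types realized in $M^k$ over $B$, since a type is determined by the subset $\{b \in B : \CM \models \varphi(a;b)\}$. The goal is to bound this type-count by $O(n^k)$.

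For the base case $k = 1$, uniform cell decomposition of $\varphi(x;y)$ in $x$ yields definable functions $h_1(y), \dotsc, h_K(y)$, with $K = K(\varphi)$, such that the $\varphi$-type of $a$ over $B$ is determined by the order-position of $a$ among the at most $K|B|$ points $\{h_i(b) : b \in B, i \in [K]\}$, together with boundary incidence data at each such point. This partitions $M$ into $O(n)$ pieces on which the $\varphi$-type is constant, so $\vc(\varphi^*) \leq 1$.

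For the inductive step, write $x = (x', x_k)$ and apply parametric cell decomposition in the last coordinate to obtain continuous definable functions $h_1(x';y) \leq \dotsb \leq h_m(x';y)$ controlling the truth of $\varphi(x', x_k; b)$ uniformly in $x'$. The $\varphi$-type of $(a', a_k)$ over $B$ then factors as: (i) the order type of the $mn$ values $\{h_i(a';b) : b \in B, i \in [m]\}$ viewed as a function of $a'$, and (ii) the position of $a_k$ within that ordering. Capture (i) by a fixed auxiliary formula $\psi(x'; \bar y)$ in $k-1$ object variables, obtained from an induced decomposition of $M^{k-1}$ along the graphs of the $h_i(x';b)$ for $b \in B$. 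By the inductive hypothesis applied to $\psi$, component (i) contributes $O(n^{k-1})$ choices, and (ii) contributes at most $mn + 1$, giving $O(n^k)$ types in total.

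The main obstacle is controlling the parameter-dependence in the inductive step: the auxiliary formula $\psi$ naturally wants to compare $h_i(x'; b_1)$ with $h_j(x'; b_2)$ for distinct $b_1, b_2 \in B$, suggesting two $|y|$-parameter blocks and hence a potentially inflated count of $O((n^2)^{k-1})$ via a naive application of the induction to pairs from $B$. The resolution is to observe that the order type of the entire $mn$-element collection of values is determined by the single-parameter family of graphs $\{(x', h_i(x';b)) : i \in [m], b \in B\}$ in $M^k$, whose induced cell decomposition of $M^{k-1}$ is controlled by a formula with a single $|y|$-sized parameter block, to which the induction applies cleanly. An alternative and more conceptual route, which is closer to the original proof, is to first establish \emph{uniform definability of types over finite sets} for $\varphi$ with the optimal number of witnessing parameters, and then extract the shatter function bound directly from the UDTFS estimate.
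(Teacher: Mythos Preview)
The paper does not prove this statement; it is quoted as a Fact with a citation to \cite{VCD1} and no argument, so there is no in-paper proof to compare against.

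On the merits of your sketch: the base case is fine, and you have correctly located the real difficulty in the inductive step. The order type of the collection $\{h_i(a';b) : i \in [m],\, b \in B\}$ is governed by the pairwise comparisons $h_i(x';b_1) < h_j(x';b_2)$, which are instances of a formula carrying \emph{two} $|y|$-blocks, so a direct appeal to the inductive hypothesis over $B$ only yields $O\big((n^2)^{k-1}\big)$. Your proposed resolution does not actually close this gap. The claim that the relevant decomposition of $M^{k-1}$ ``is controlled by a formula with a single $|y|$-sized parameter block'' is precisely what needs to be proved: the pieces of $M^{k-1}$ on which the order type is constant are cut out by the loci $h_i(x';b_1) = h_j(x';b_2)$, and there is no single-parameter formula whose instances over $B$ recover all of these. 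Passing to the family of graphs in $M^k$ does not help, since counting the cells of the arrangement they induce on $M^{k-1}$ is the same two-parameter problem restated.

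Your final sentence points in the right direction. The argument in \cite{VCD1} does not attempt a direct type-count; it establishes the $\mathrm{VC}\,k$ property (uniform definability of $\varphi$-types over finite sets using $k = |x|$ parameter witnesses) by an induction on $|x|$ that tracks definable \emph{representatives} for the types rather than merely their number, and the shatter-function bound $O(n^{|x|})$ then follows immediately. Your sketch gestures at this alternative but does not carry it out, and the direct count as written does not go through.
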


\begin{rem}\label{rem: distal decomp bounds dual VC density}
Let $\varphi(x;y)$ be a formula admitting a distal cell decomposition $\CT$ with $|\CT(S)| = O(|S|^d)$. Then $\vc(\varphi^*) \leq d$. 

Indeed, recalling Definition \ref{def: def cell decomp}, given any finite $S \subseteq M^{|y|}$ and $\Delta \in \CT(S)$, $S \cap \varphi^*(M,a) = S \cap \varphi^*(M,a')$ for any $a,a' \in \Delta$ (and the sets in $\CT(S)$ give a covering of $M^{|x|}$), hence at most $|S|^d$ different subsets of $S$ are cut out by the instances of $\varphi^*(y;x)$.
\end{rem}

We will need the following weaker bound that applies to graphs of bounded VC-density.

\begin{fact} \label{VCBoundOnEdges}
\cite[Theorem 2.1]{zaran} For every $\alpha \in \mathbb{R}$ and  $d, k \in \mathbb{N}$ there is some constant $\alpha_1 = \alpha_1(\alpha,d,k)$ such that the following holds.

  Let $E \subseteq P \times Q$ be a bipartite graph with $|P|=m, |Q|=n$ such that the family of sets $\CF = \{ E(q) : q \in Q \}$ satisfies $\pi_{\mathcal{F}}(z) \leq \alpha z^d$ for all $z \in \mathbb{N}$ (where $E(q) = \{ p \in P : (p,q) \in E \}$). Then if $E$ is $K_{k,k}$-free, we have 
$$|E(P,Q)| \leq \alpha_1(m n^{1- 1/d}+n).$$
\end{fact}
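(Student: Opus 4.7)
This is quoted from \cite[Theorem 2.1]{zaran}, whose detailed proof is given in that reference; I outline the strategy I would follow to reproduce it.

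The starting point is the classical Kővári--Sós--Turán double counting
\[
\sum_{q \in Q}\binom{|E(q)|}{k} \;=\; \sum_{T \in \binom{P}{k}} \big|\{q \in Q : T \subseteq E(q)\}\big| \;\leq\; (k-1)\binom{m}{k},
\]
which, combined with Jensen's inequality, only yields the weaker bound $|E(P,Q)|=O(mn^{1-1/k}+n)$. The improvement from exponent $1-1/k$ to the sharper $1-1/d$ must come from the polynomial shatter-function hypothesis $\pi_{\mathcal F}(z)\leq cz^d$, which is strictly stronger than the VC-dimension bound implicit in the $K_{k,k}$-free assumption.

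To bring in the shatter function I would apply Haussler's packing theorem: any family of subsets of an $m$-element set with primal shatter function $O(z^d)$ admits at most $O((m/s)^d)$ pairwise $s$-separated sets in the Hamming metric. I would then dyadically bucket $Q$ by $|E(q)|$ (the slabs with $|E(q)|<k$ together contributing at most $kn$ edges, yielding the additive $+n$), and focus on a single slab $Q_i$ in which $|E(q)|\approx s\geq k$. Cover the sets $\{E(q):q\in Q_i\}$ by $O((m/s)^d)$ Hamming balls of radius $\sim s$; within one such ball every two sets $E(q),E(q')$ intersect in $\Omega(s)$ elements, so a short auxiliary Kővári--Sós--Turán argument on the bipartite incidence structure between the ball's common core in $P$ and the points of $Q$ inside the ball forces the ball to contain only $O_k(1)$ members of $Q_i$ (any larger configuration would exhibit a $K_{k,k}$ in $E$). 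Consequently $|Q_i|=O_k((m/s)^d)$; rearranging this inequality gives $s\leq O_k(m|Q_i|^{-1/d})$, so the edges in the slab are bounded by $s|Q_i|=O_k(m|Q_i|^{1-1/d})$. A careful aggregation over slabs --- done either by a weighted reformulation in place of the strict dyadic partition, or by using a single cleverly chosen threshold --- then produces $\sum_q |E(q)|\leq c_1(mn^{1-1/d}+n)$.

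The principal obstacle is the intra-ball step: forcing $k$ distinct points of $Q$ lying inside one Hamming ball to share a common $k$-subset of $P$. This is the technical heart of the argument and is where the $k$-dependence of the constant $c_1=c_1(c,d,k)$ originates. A secondary technical obstacle is the aggregation of the slabs, where a naive dyadic partition loses a factor of $(\log n)^{1/d}$ that must be eliminated by a weighted or direct-threshold argument to obtain the clean bound.
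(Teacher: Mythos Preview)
The paper does not give a proof of this statement; it is quoted as a black box from \cite{zaran}. Your outline via Haussler's packing lemma is a workable route, but it is not the argument in \cite{zaran}, which is more elementary and avoids the packing lemma. There one chooses a random subset $A\subseteq P$ of size $s\approx 2km/t$; the shatter-function hypothesis bounds the number of distinct traces $E(q)\cap A$ by $cs^{d}$, and any trace of size at least $k$ can be shared by at most $k-1$ points $q\in Q$ (otherwise a $K_{k,k}$ appears on those $k$ elements of $A$). Since each $q$ with $|E(q)|\ge t$ satisfies $|E(q)\cap A|\ge k$ with probability bounded away from zero, one obtains the tail estimate $|\{q:|E(q)|\ge t\}|=O_{k}((m/t)^{d})$ in one stroke, and the edge bound follows by summing this over $t$ (layer-cake). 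This sidesteps both obstacles you flag: there is no intra-ball step at all, and the tail-sum formulation avoids the dyadic $\log$ loss automatically.

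Your approach also reaches the same tail bound, but your intra-ball step is easier than you indicate. With ball radius $s/(2k)$ and centre $C$, any $k$ members $q_{1},\dots,q_{k}$ of the ball satisfy $\bigl|C\setminus\bigcap_{i} E(q_{i})\bigr|\le k\cdot s/(2k)=s/2$, while $|C|\ge s-s/(2k)$; hence $\bigcap_{i}E(q_{i})$ already contains a $k$-subset of $C$ once $s$ exceeds a constant depending on $k$. Thus each ball holds fewer than $k$ points of $Q_{i}$ by a one-line union bound, and no auxiliary K\H{o}v\'ari--S\'os--Tur\'an argument is needed. What you call the ``principal obstacle'' is therefore not one; the genuine content is the packing lemma itself, which the sampling proof replaces by a direct appeal to the shatter function.
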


%

We are ready to prove the main theorem of this section.

\begin{thm}\label{thm: distal Zarank}
Let $\CM$ be a structure and $d,t \in \mathbb{N}_{\geq 2}$. Assume that $E(x,y) \subseteq M^{|x|} \times M^{|y|}$ is a definable relation given by an instance of a formula $\theta(x,y;z) \in \mathcal{L}$, such that the formula $\theta'(x;y,z) := \theta(x,y;z)$ admits a distal cell decomposition $\CT$ with    $|\CT(S)| = O(|S|^t)$ and such that $\vc(\theta'') \leq d$ for $\theta''(x,z;y) := \theta(x,y;z)$. Then for any $k \in \mathbb{N}$ there is a constant $\alpha = \alpha(\theta,k)$ satisfying the following.

For any finite $P \subseteq M^{|x|}, Q \subseteq M^{|y|}$, $|P|=m, |Q| = n$, if $E(P,Q)$ is $K_{k,k}$-free, then we have:
\begin{equation} \label{eq: Zar}
|E(P,Q)| \leq \alpha \left( m^{\frac{(t-1)d}{td-1}} n^{\frac{td-t}{td-1}} + m + n \right).
\end{equation}
\end{thm}

\begin{proof} 

Our argument is a generalization of the proofs of \cite[Theorem 3.2]{zaran} and \cite[Theorem 4]{pach1992repeated}.

Let $E(x,y) = \theta(x,y;c^*) = \theta'(x;y,c^*) = \theta''(x,c^*;y)$ for a given tuple of parameters $c^* \in M^{|z|}$. Note that for any $n \in \mathbb{N}$ we clearly have $\pi_{\mathcal{F}_{E(x,y)}}(n) = \pi_{\mathcal{F}_{\theta''(x,c^*;y)}} \leq  \pi_{\mathcal{F}_{\theta''(x,z;y)}}(n)$. By assumption $\vc(\theta''(x,z;y)) \leq d$, hence there is some $\alpha_0 = \alpha_0(\theta)$ such that $\pi_{\mathcal{F}_{E(x,y)}}(n) \leq \alpha_0 n^d$.

If $n \geq m^d$, then by Fact \ref{VCBoundOnEdges} we have 
$$|E(P,Q)| \leq \alpha_1 ( m n^{1 - \frac{1}{d}} + n) \leq \alpha_1 (n^{\frac{1}{d}} n^{1 - \frac{1}{d}} + n) =  2\alpha_1 n$$ 
for some $\alpha_1 = \alpha_1(\theta,d,k)$, and we are done.
Hence we assume $n < m^d$.

Let $r := \frac{m^{\frac{d}{td-1}}}{n^{\frac{1}{td-1}}}$ (note that $r > 1$ as $m^d > n$), and consider the family $\Sigma = \{E(M,q) : q \in Q\}$ of subsets of $M^{|x|}$. 

By assumption and Theorem \ref{lem-r_cut} (and Remark  \ref{rem: triv cut r geq n} in the case $r \geq n$) applied to the formula $\theta'(x; y,z)$ and the collection of parameters $H := \{ (q,c^*) \in M^{|y|} \times M^{|z|} : q \in Q\}$, there is a family $\mathcal{C}$ of subsets of $M^{|x|}$ giving a $\frac{1}{r}$-cutting for the family $\Sigma$.
That is, $M^{|x|}$ is covered by the union of the sets in $\mathcal{C}$ and any of the sets $C \in \mathcal{C}$ is crossed by at most $|\Sigma|/r$ elements from $\Sigma$. Moreover, $|\mathcal{C}| \leq \alpha_2 r^t$ for some $\alpha_2 = \alpha_2(\theta)$.

Then there is a set $C \in \mathcal{C}$ containing at least $\frac{m}{\alpha_2 r^t} = \frac{n^{\frac{t}{td-1}}}{\alpha_2 m ^{\frac{1}{td-1}}}$ points from $P$. Let $P' \subseteq P \cap C$ be a subset of size exactly $\left\lceil \frac{n^{\frac{t}{td-1}}}{\alpha_2 m ^{\frac{1}{td-1}}} \right\rceil$.

If $|P'| < k$, we have $\frac{n^{\frac{t}{td-1}}}{\alpha_2 m ^{\frac{1}{td-1}}} \leq |P'| < k$, so $n < k^{\frac{td-1}{t}} \alpha_2^{\frac{td-1}{t}}m^{\frac{1}{t}}$.

Note that $\pi_{\mathcal{F}_{E^*(y,x)}}(n)  = \pi_{\mathcal{F}_{(\theta')^*(y,c^*;x)}}(n)   \leq \pi_{\mathcal{F}_{(\theta')^*(y,z;x)}}(n) \leq  \alpha_3 n^t$ for some $\alpha_3 = \alpha_3(\theta)$, where the last inequality holds by Remark \ref{rem: distal decomp bounds dual VC density} applied to the formula $\theta'(x; y,z)$. Then by  Fact \ref{VCBoundOnEdges} applied to the relation $E^*$  we have 
$$|E(P,Q)| \leq \alpha_4 (n m^{1 - \frac{1}{t}} + m) \leq \alpha_4 (k^{\frac{td-1}{t}} \alpha_2^{\frac{td-1}{t}}m^{\frac{1}{t}} m^{1 - \frac{1}{t}} + m) \leq \alpha_5 m$$
for some $\alpha_5 = \alpha_5 (\theta,k)$, so we are done.

Hence we may assume that $|P'| \geq k$. 
Let $Q'$ be the set of all points $q \in Q$ such that $E(M,q)$ crosses $C$. We know that 
$$|Q'| \leq \frac{|Q|}{r} \leq  \frac{n n^{\frac{1}{td-1}}}{m^{\frac{d}{td-1}}} = \frac{n^{\frac{td}{td-1}}}{m^{\frac{d}{td-1}}} \leq \alpha_2^d |P'|^d.$$
Again by Fact \ref{VCBoundOnEdges} we get 
$$|E(P',Q')| \leq \alpha_1(|P'| |Q'|^{1-\frac{1}{d}} + |Q'|)$$
$$ \leq \alpha_1 (|P'|\alpha_2^{d-1} |P'|^{d-1} + \alpha_2^d |P'|^d) \leq \alpha_6 |P'|^d$$
for some $\alpha_6 = \alpha_6(\theta,k)$. Hence there is a point $p \in P'$ such that $|E(p) \cap Q'| \leq \alpha_6 |P'|^{d-1}$.

Since $E(P,Q)$ is $K_{k,k}$-free, there are at most $k-1$ points in $Q\setminus Q'$ from $E(p)$ (otherwise, since none of those points crosses $C$ and $C$ contains $P'$, which is of size $\geq k$, we would have a copy of $K_{k,k}$). And we have $|P'| \leq  \frac{n^{\frac{t}{td-1}}}{\alpha_2 m ^{\frac{1}{td-1}}} + 1  \leq \frac{2}{\alpha_2} \frac{n^{\frac{t}{td-1}}}{ m ^{\frac{1}{td-1}}}$ as $|P|' \geq k \geq 1$. Hence 
$$|E(p)| \leq \alpha_6 |P'|^{d-1} + (k-1) \leq \alpha_7 \frac{n^{\frac{t(d-1)}{td-1}}}{m^{\frac{d-1}{td-1}}} + (k-1)$$
for $\alpha_7 := \frac{\alpha_6 2^{d-1}}{\alpha_2^{d-1}}$. We remove $p$ and repeat the argument until we have no vertices remaining in $P$, and see that 
$$|E(P,Q)| \leq (2 \alpha_1 + \alpha_5)(n+m) + \sum_{i=n^{\frac{1}{d}}}^{m} \left( \alpha_7 \frac{n^{\frac{t(d-1)}{td-1}}}{i^{\frac{d-1}{td-1}}} + (k-1) \right)$$
$$ \leq (2 \alpha_1 + \alpha_5)(n+m) + \alpha_7 n^{\frac{t(d-1)}{td-1}} \sum_{i=n^{\frac{1}{d}}}^{m} \frac{1}{i^{\frac{d-1}{td-1}}} + (k-1) m .$$
Note that 
$$\sum_{i=n^{\frac{1}{d}}}^{m} \frac{1}{i^{\frac{d-1}{td-1}}} \leq \int_{n^{\frac{1}{d}} -1}^{m} \frac{dx}{x^{\frac{d-1}{td-1}}} = \frac{m^{1 - \frac{d-1}{td-1}}}{1-\frac{d-1}{td-1}} - \frac{\left(n^{\frac{1}{d}} -1 \right)^{1 - \frac{d-1}{td-1}}}{1 - \frac{d-1}{td-1}}$$
$$ \leq \frac{td-1}{(t-1)d} m^{1 - \frac{d-1}{td-1}}$$
using $d,t \geq 2$, for all $n$ large enough with respect to $d$ (as the second term is positive then). Hence we can choose $\alpha = \alpha(\theta,k)$ large enough so that
$$|E(P,Q)| \leq \frac{\alpha}{3}(n+m) + \frac{\alpha}{3} n^{\frac{t(d-1)}{td-1}} m^{1-\frac{d-1}{td-1}} + \frac{\alpha}{3}m$$
$$ \leq \alpha ( m^{\frac{(t-1)d}{td-1}} n^{\frac{td-t}{td-1}} + m + n)$$
for all $m,n$.
\end{proof}

\begin{rem}
	In a different regime, one can consider the situation when $E$ admits a distal cell decomposition of exponent $t$, but instead of bounding the dual VC-density by $d$, we assume that $K_{s,d}$ is omitted. Then same bound as in \eqref{eq: Zar} holds, up to terms of smaller order, with the constant $\alpha$ depending only on $s,d, \theta$ --- see \cite{ESstrmin} for the details.
	
\end{rem}

\subsection{Omitting $K_{k,k}$ versus omitting infinite complete bipartite graphs}
We recall a result of Bukh and Matou\v{s}ek.

\begin{fact}\cite[Theorem 1.9]{bm}\label{fac: semialg BM}
For every $d,D$ and $k$ there exists $N$ such that for every semialgebraic relation $R(x_1, \ldots, x_k)$ with $|x_1| = \ldots = |x_k|=d$ of description complexity $D$, the following two conditions are equivalent.
\begin{enumerate}
\item There exist $A_1, \ldots, A_k \subseteq \mathbb{R}^d$ such that $|A_1| = \ldots = |A_k| = N$ and $A_1 \times \ldots \times A_k \subseteq R$.
\item There exist \emph{infinite} sets $A_1, \ldots, A_k \subseteq \mathbb{R}^d$ such that $A_1 \times \ldots \times A_k \subseteq R$.
\end{enumerate}

\end{fact}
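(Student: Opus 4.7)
The direction (2) $\Rightarrow$ (1) is immediate by taking finite subsets of infinite sets. My plan for (1) $\Rightarrow$ (2) is to combine a strong semialgebraic partition theorem for $k$-ary relations with pigeonholing and the $o$-minimal cell-count bound.

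The main tool I would invoke is a strong semialgebraic partition statement: for every complexity $D$, dimension $d$, and arity $k$, any semialgebraic $R\subseteq(\mathbb{R}^d)^k$ of complexity at most $D$ admits a decomposition of $(\mathbb{R}^d)^k$ into at most $M=M(D,d,k)$ rectangular product cells $B^{(i)}_1\times\dotsb\times B^{(i)}_k$, with each $B^{(i)}_j$ a semialgebraic subset of $\mathbb{R}^d$ uniformly definable from the parameters of $R$, such that each product cell is either wholly contained in $R$ or wholly disjoint from $R$. For the bipartite case $k=2$ this is the strong semialgebraic regularity lemma (Fox--Pach--Suk, proved via polynomial partitioning); for general $k$ it follows by iterating the bipartite statement after grouping coordinates, together with uniform $o$-minimal cell decomposition applied coordinate by coordinate to rectify the cylindrical cells into genuinely axis-aligned products.

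Given such a partition and an $N$-box $A_1\times\dotsb\times A_k\subseteq R$, a pigeonhole across the $M$ product cells, applied iteratively in each coordinate, produces a single cell $B^{(i)}_1\times\dotsb\times B^{(i)}_k$ containing a sub-box $A'_1\times\dotsb\times A'_k$ with $|A'_j|\geq N/M$ in each coordinate. Since $A'_1\times\dotsb\times A'_k\subseteq R$, homogeneity of the partition forces $B^{(i)}_1\times\dotsb\times B^{(i)}_k\subseteq R$. Choose $N$ so that $N/M$ exceeds the maximum cardinality $c=c(D,d)$ of a zero-dimensional semialgebraic subset of $\mathbb{R}^d$ of the bounded complexity of the $B^{(i)}_j$'s (this $c$ is provided by the $o$-minimal cell count applied to the uniform defining family of the $B^{(i)}_j$'s). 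Then each $B^{(i)}_j$, containing $|A'_j|>c$ distinct points, must be positive-dimensional and hence infinite, yielding the required product of infinite semialgebraic sets.

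The main obstacle is establishing the \emph{rectangular} product-cell partition: the naive $o$-minimal cylindrical cell decomposition of $R$ produces cells that depend cylindrically on earlier coordinates rather than splitting as honest axis-aligned products, and forcing a genuinely rectangular form requires an additional refinement (via polynomial partitioning over $\mathbb{R}$, or via iterating uniform cell decomposition over each coordinate block and then regluing by a dimension argument). Once this partition is set up with cell count bounded only by $D$, $d$ and $k$, the remaining pigeonhole and $o$-minimal cell-count step are routine, and $N$ can be taken of order $M\cdot c$.
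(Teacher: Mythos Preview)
Your central tool---a decomposition of $(\mathbb{R}^d)^k$ into boundedly many \emph{rectangular} product cells $B_1^{(i)}\times\dotsb\times B_k^{(i)}$ each of which is homogeneous for $R$---is false, and no amount of ``refining'' cylindrical cell decomposition will rescue it. Already for $k=2$, $d=1$ and $R=\{(x,y):x<y\}$ (description complexity $1$), no finite collection of axis-aligned rectangles, each wholly inside $R$ or wholly inside $\neg R$, can cover $\mathbb{R}^2$: every rectangle meeting the diagonal $x=y$ meets both $R$ and its complement. The Fox--Pach--Suk semialgebraic regularity lemma you allude to gives only that all but an $\varepsilon$-fraction of pairs of parts are homogeneous, with exceptional pairs allowed; the exception-free version you need simply does not exist. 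So the argument breaks at its first nontrivial step, and the ``main obstacle'' you flag is not a technicality but a genuine obstruction.

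The paper does not prove this Fact directly (it is quoted from Bukh--Matou\v{s}ek), but it derives it as the special case $\CM=\mathbb{R}$ of Theorem~\ref{thm: distal BukhMatousek}, whose proof uses the right substitute for your partition: the \emph{definable strong Erd\H{o}s--Hajnal property} in distal structures (Fact~\ref{fac: definable strong EH in distal}). That result does not give a full homogeneous partition, only a \emph{single} definable box $\psi_1(M,b_1)\times\dotsb\times\psi_k(M,b_k)$ which is homogeneous for $R$ and has $\mu_i$-measure $\geq\alpha$ in each coordinate, for \emph{any} choice of generically stable measures $\mu_i$. Taking the $\mu_i$ to be the normalized counting measures on the $A_i$, one gets $|\psi_i(M,b_i)\cap A_i|\geq\alpha N$; since $A_1\times\dotsb\times A_k\subseteq R$ forces the homogeneous box to lie in $R$, and since elimination of $\exists^\infty$ bounds the size of finite $\psi_i$-definable sets uniformly, choosing $N$ large enough makes each $\psi_i(M,b_i)$ infinite. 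The moral: you only need one positive-density homogeneous rectangle, not a partition into them, and that is exactly what strong Erd\H{o}s--Hajnal delivers.
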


We give a generalization of this result for any distal structure in which finite sets in every definable family have a uniform bound on their size.
Recall:

\begin{defn}\label{def: elimination of infinity}
An $\mathcal{L}$-structure $\mathcal{M}$ \emph{eliminates $\exists^\infty$} if for every $\varphi(x,y) \in \mathcal{L}$ there is some $n_\varphi \in \mathbb{N}$ such that for any $b \in M^{|y|}$, $\varphi(M,b)$ is infinite if and only if $|\varphi(M,b)| \geq n_\varphi$.
\end{defn}

We will use the definable strong Erd\H{o}s-Hajnal property for hypergraphs in distal structures from \cite{distal} (and we will use some terminology from that paper in our argument).
\begin{fact}\cite[Corollary 4.6]{distal}\label{fac: definable strong EH in distal}
Let $\mathcal{M}$ be a distal $\mathcal{L}$-structure. Then for every formula $\varphi(x_1, \ldots, x_k; z) \in \mathcal{L}$ there are some $\alpha > 0$ and formulas $\psi_i(x_i, y_i) \in \mathcal{L}$ for $1 \leq i \leq k$ such that the following holds.

For any generically stable Keisler measures $\mu_i$ on $M^{|x_i|}$ and any $c \in M^{|z|}$, there are some $b_i \in M^{|y_i|}$ such that $\mu_i(\psi_i(M^{|x_i|},b_i)) \geq \alpha$ and either 
$$\prod_{1\leq i \leq k} \psi_i(M^{|x_i|},b_i) \subseteq \varphi(M^{|x_1|}, \ldots, M^{|x_k|}; c) \textrm{, or}$$ 
$$\prod_{1\leq i \leq k} \psi_i(M^{|x_i|},b_i) \subseteq \neg \varphi(M^{|x_1|}, \ldots, M^{|x_k|}; c).$$
\end{fact}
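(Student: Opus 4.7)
The plan is to prove this by induction on the arity $k$, with the distal cutting lemma (Theorem \ref{lem-r_cut}) serving as the main combinatorial engine. For $k = 1$, one takes $\psi_1 = \varphi$ and $\alpha = 1/2$: since $\mu_1(\varphi(M,c)) + \mu_1(\neg\varphi(M,c)) = 1$, one of the two values is at least $1/2$, which suffices, handling the two directions by the symmetry between $\varphi$ and $\neg\varphi$ (formally, working with the two formulas $\varphi$ and $\neg\varphi$ as candidates for $\psi_1$).

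For the inductive step, view $\varphi(x_1, \ldots, x_k; z)$ as a family of subsets of $M^{|x_1|}$ parametrized by $(x_2, \ldots, x_k, z)$, and exploit the distal cell decomposition of this family guaranteed by Fact \ref{fac: char of distality}. First, draw a ``sample'' $S$ from the product measure $\mu_2 \times \cdots \times \mu_k$ on $M^{|x_2| + \cdots + |x_k|}$; in the NIP setting, a generically stable measure admits good finite approximations (Morley-like sequences), so the subsequent combinatorial argument on $S$ transfers to the measure-theoretic setting. Apply the distal cutting lemma with parameter $r$ to obtain a cover of $M^{|x_1|}$ by $O(r^d)$ cells, each crossed by at most a $1/r$-fraction of the sections $\{\varphi(x_1; a_2, \ldots, a_k, c) : (a_2, \ldots, a_k) \in S\}$. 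By averaging $\mu_1$-mass over the $O(r^d)$ cells, at least one cell $X$ has $\mu_1(X) \geq \alpha_1 := \Omega(1/r^d)$, and by construction $\mu_2 \times \cdots \times \mu_k$-almost all tuples $(a_2, \ldots, a_k)$ leave $\varphi(\cdot; a_2, \ldots, a_k, c)$ constant on $X$. Splitting these ``good'' tuples into those where the constant value is true vs.\ false and taking the majority half yields a positive-measure set $T \subseteq M^{|x_2| + \cdots + |x_k|}$ such that, say, $X \subseteq \varphi(\cdot; a_2, \ldots, a_k, c)$ for every $(a_2, \ldots, a_k) \in T$.

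The cell $X$ is of the form $\psi_1(M, b_1)$ for a formula $\psi_1$ coming from the distal cell decomposition and parameters $b_1$, and $T$ is defined by the formula
\[
\chi(x_2, \ldots, x_k; z, y_1) \; := \; \forall x_1 \, \bigl( \psi_1(x_1, y_1) \to \varphi(x_1, x_2, \ldots, x_k; z) \bigr)
\]
evaluated at $(c, b_1)$. Apply the inductive hypothesis to $\chi$ as a $(k-1)$-ary relation on $(x_2, \ldots, x_k)$ with measures $\mu_2, \ldots, \mu_k$ and parameter $(c, b_1)$: this produces formulas $\psi_2, \ldots, \psi_k$ (depending only on $\chi$, hence only on $\varphi$) and $b_i$'s with $\mu_i(\psi_i(M, b_i)) \geq \alpha'$ such that $\prod_{i=2}^k \psi_i(M, b_i)$ is monochromatic for $\chi(\cdot; c, b_1)$. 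In the positive direction, one then has $\prod_{i=1}^k \psi_i(M, b_i) \subseteq \varphi(\cdot; c)$, as required; taking $\alpha = \min(\alpha_1, \alpha')$ completes the induction.

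I expect the main obstacle to be twofold. First, the ``random sampling'' step must be implemented rigorously for generically stable Keisler measures rather than finite sets: this requires invoking the machinery of approximate types / Morley sequences for generically stable measures in NIP structures, ensuring the cell decomposition parameters and the cell $X$ can be chosen measurably. Second, the inductive step as stated delivers a monochromatic box but does not a priori let us prescribe the direction. Handling this asymmetry seems to require either strengthening the inductive statement to allow one to find a monochromatic box on any prescribed positive-measure set (which is natural once the inductive hypothesis is formulated on relativized measures), or iterating the construction: if the induction returns a box in the wrong direction, repeat with a different choice of $b_1$ or a different cell from the cutting, using that only a bounded number of cells are available and the positive-measure set $T$ must meet a positive fraction of them.
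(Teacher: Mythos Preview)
This statement is not proved in the paper: it is quoted as a \emph{Fact} with a citation to \cite[Corollary~4.6]{distal}, and is used here only as a black box in the proof of Theorem~\ref{thm: distal BukhMatousek}. There is therefore no proof in the present paper to compare yours against. (The result in \cite{distal} also predates the cutting lemma established here, and its original proof proceeds directly from the distal cell decomposition rather than via Theorem~\ref{lem-r_cut}.)

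On the merits of your sketch: the inductive scheme is natural, and the sampling issue you flag can indeed be handled via finite approximability of generically stable measures in NIP together with VC-uniformity, since the candidate cells all come from the fixed finite family $\Psi$. The direction problem, however, is a genuine gap that your proposed fixes do not close. After applying the inductive hypothesis to $\chi(x_2,\dots,x_k;c,b_1)$ you obtain a box $B=\prod_{i\ge 2}\psi_i(M,b_i)$ with either $B\subseteq\chi$ or $B\subseteq\neg\chi$. The second alternative only tells you that for each tuple in $B$ \emph{some} point of $X$ lies outside the fibre of $\varphi$; this gives neither $X\times B\subseteq\neg R$ nor any box inside $R$. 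Relativising the measures does not help, because the inductive hypothesis (as stated) still permits the returned box to land entirely inside the small ``crossing'' region, which is part of $\neg\chi$; and iterating over the $O(r^d)$ cells does not force a favourable outcome either, since nothing prevents every cell from failing in the same way. To make an inductive argument of this shape go through one typically needs a genuinely stronger inductive statement (for instance a density version: every definable set of product measure at least $\delta$ contains a definable box of measure at least $\varepsilon(\delta,\varphi)$), or a two-stage argument that first applies the hypothesis to $\varphi(a_1,x_2,\dots,x_k;c)$ for some fixed $a_1\in X$ and then separately eliminates the crossing set. In either case the bookkeeping needed to keep the formulas $\psi_i$ uniform in $\varphi$ is not supplied by your outline.
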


\begin{thm}\label{thm: distal BukhMatousek}
Let $\mathcal{M}$ be a distal $\mathcal{L}$-structure eliminating $\exists^{\infty}$. Then for any formula $\varphi(x_1, \ldots, x_k;z) \in \mathcal{L}$ there is some $N \in \mathbb{N}$ and $\psi_i(x_i,y_i) \in \mathcal{L}$, for $1\leq i \leq k$, such that the following are equivalent for any $c \in M^{|z|}$, letting $R \subseteq M^{|x_1|} \times \ldots \times M^{|x_k|}$ be given by $R := \varphi(M^{|x_1|}, \ldots, M^{|x_k|}, c)$.

\begin{enumerate}
\item There exist $A_i \subseteq M^{|x_i|}$ for $1\leq i \leq k$ such that $|A_1| = \ldots = |A_k| = N$ and $A_1 \times \ldots \times A_k \subseteq R$.
\item There are some $b_i \in M^{|y_i|}$ such that $\psi_i(M^{|x_i|}, b_i)$ is infinite for all $1 \leq i \leq k$ and $\psi_1(M^{|x_1|}, b_1) \times \ldots \times \psi_k(M^{|x_k|}, b_k) \subseteq R$.
\end{enumerate}

\end{thm}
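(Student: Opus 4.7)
The plan is to deduce the nontrivial direction (1)$\Rightarrow$(2) by applying the distal definable strong Erd\H{o}s--Hajnal property (Fact~\ref{fac: definable strong EH in distal}) to the normalized counting measures on the witnessing sets $A_i$. First, I would apply that fact to $\varphi(x_1,\dotsc,x_k;z)$ to extract the formulas $\psi_i(x_i,y_i)$ and a constant $\alpha>0$. Since $\CM$ eliminates $\exists^\infty$, for each $i$ there is a threshold $n_i\in\NN$ such that $|\psi_i(M^{|x_i|},b)|\ge n_i$ forces $\psi_i(M^{|x_i|},b)$ to be infinite; set $N:=\max_i\lceil n_i/\alpha\rceil$ as the constant promised by the theorem.

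To execute the argument, assume (1) holds with this $N$, giving sets $A_i\subseteq M^{|x_i|}$ of size $N$ with $A_1\times\dotsb\times A_k\subseteq R$. Let $\mu_i$ be the uniform probability measure on $A_i$, i.e.\ $\mu_i(X):=|X\cap A_i|/N$ on each definable set $X$. Each $\mu_i$ is a convex combination of Dirac masses at points of $M$; each such Dirac measure is smooth, hence so is $\mu_i$, and smooth measures are generically stable in any NIP (in particular distal) theory. Feeding the measures $\mu_i$ and parameter $c$ into Fact~\ref{fac: definable strong EH in distal} produces $b_i\in M^{|y_i|}$ with $\mu_i(\psi_i(M^{|x_i|},b_i))\ge\alpha$ and with the product $\prod_i\psi_i(M^{|x_i|},b_i)$ either contained in $R$ or disjoint from $R$. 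The measure bound translates to $|\psi_i(A_i,b_i)|\ge\alpha N\ge n_i$, so each $\psi_i(M^{|x_i|},b_i)$ is infinite by the choice of $n_i$, and the nonempty product $\prod_i\psi_i(A_i,b_i)\subseteq A_1\times\dotsb\times A_k\subseteq R$ rules out the disjointness alternative, forcing $\prod_i\psi_i(M^{|x_i|},b_i)\subseteq R$. This yields (2). The reverse direction (2)$\Rightarrow$(1) is immediate: pick $N$ distinct points from each infinite $\psi_i(M^{|x_i|},b_i)$.

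The main conceptual point is recognizing the finitely-supported measures $\mu_i$ as legitimate generically stable inputs to Fact~\ref{fac: definable strong EH in distal}, which is handled by the smoothness observation above. A secondary issue is ensuring the parameters $b_i$ returned by that fact actually lie in $\CM$ itself: thanks to elimination of $\exists^\infty$, both (1) and (2) are first-order properties of $c$ (infiniteness of $\psi_i(M^{|x_i|},b_i)$ is expressible by the formula $\exists^{\ge n_i} x_i\,\psi_i(x_i,b_i)$), so one may pass to a sufficiently saturated elementary extension, apply the fact there, and then descend to $\CM$ by elementary equivalence.
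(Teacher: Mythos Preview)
Your proof is correct and follows essentially the same approach as the paper: apply Fact~\ref{fac: definable strong EH in distal} to the counting measures on the $A_i$, use the $\alpha$-lower bound together with elimination of $\exists^\infty$ to force the $\psi_i$-sets to be infinite, and rule out the $\neg R$ alternative via a point of $\prod_i A_i$. Your version adds a few justifications the paper leaves implicit (smoothness of finitely supported measures, the trivial direction (2)$\Rightarrow$(1), and the elementarity remark about parameters), but the core argument is identical.
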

\begin{proof}
Let $\alpha > 0$ and $\psi_i(x_i,y_i) \in \mathcal{L}$, for $1\leq i \leq k$, be as given by Fact \ref{fac: definable strong EH in distal} for $\varphi(x_1, \ldots, x_k;z)$.
Let $n_i \in \mathbb{N}$ be as given by Definition \ref{def: elimination of infinity} for $\psi_i(x_i,y_i)$, and let $n := \max \{ n_i : 1 \leq i \leq k \}$.
We take $N := \lceil \frac{n}{\alpha} \rceil$, then $N = N(\varphi)$.

Let $c \in M^{|z|}$ be arbitrary, and let $R := \varphi(M^{|x_1|}, \ldots, M^{|x_k|}, c)$. Assume that (1) holds. That is, there are some $A_i \subseteq M^{|x_i|}$ such that $|A_1| = \ldots = |A_k| = N$ and $A_1 \times \ldots \times A_k \subseteq R$. Let $\mu_i$ be a Keisler measure on $M^{|x_i|}$ defined by $\mu_i(X) := \frac{|A_i \cap X|}{|A_i|}$ for all definable $X \subseteq M^{|x_i|}$, then $\mu_i$ is generically stable for all $1 \leq i \leq k$. Applying Fact \ref{fac: definable strong EH in distal}, we find some $b_i \in M^{|y_i|}$ such that $\mu_i(\psi_i(M^{|x_i|}, b_i)) \geq \alpha$ and $\prod_{1\leq i \leq k} \psi_i(M^{|x_i|},b_i) \subseteq R$ (note that $\prod_{1\leq i \leq k} \psi_i(M^{|x_i|},b_i) \subseteq \neg R$ is impossible as $\prod_{1 \leq i \leq k} A_i \subseteq R$). Now for any $1\leq i \leq k$, $\mu_i(\psi_i(M^{|x_i|}, b_i)) \geq \alpha$ implies $|\psi_i(A_i, b_i)| \geq \alpha N \geq n_i$, hence $\psi_i(M^{|x_i|}, b_i)$ is infinite by the choice of $n_i$, as wanted.
\end{proof}

\begin{rem}
Examples of structures satisfying the assumption of Theorem \ref{thm: distal BukhMatousek} are given by arbitrary o-minimal structures and $p$-minimal structures (e.g. the field $\mathbb{Q}_p$). Hence Fact \ref{fac: semialg BM} follows by applying it to the field of reals.
\end{rem}
\subsection{The o-minimal case}

Theorem \ref{thm: distal BukhMatousek} implies that in Theorem \ref{thm: distal Zarank}, assuming $\mathcal{M}$ eliminates $\exists^\infty$, we can relax the assumption to just assuming that $E$ doesn't contain a copy of an infinite complete bipartite graph. We conclude by observing that all of these results apply to o-minimal expansions of fields.

\begin{thm}\label{thm: everything o-min}

Let $\CM$ be an o-minimal expansion of a field and let $E(x,y) \subseteq M^2 \times M^d$ be a $\theta$-definable relation. 

\begin{enumerate}
\item For every $k \in \mathbb{N}$ there is a constant $\alpha = \alpha(\theta,k)$ such that for any finite $P \subseteq M^2, Q \subseteq M^d$, $|P|=m, |Q| = n$, if $E(P,Q)$ does not contain a copy of $K_{k,k}$ (the complete bipartite graph with two parts of size $k$), then we have 
	$$ |E(P,Q)| \leq \alpha \left( m^{\frac{d}{2d-1}} n^{\frac{2d-2}{2d-1}} + m + n \right).$$

	\item There is some $k' \in \mathbb{N}$ and formulas $\varphi(x,v), \psi(y,w)$, all depending only on $\theta$,  such that if $E$ contains a copy of $K_{k',k'}$, then there are some parameters $b \in M^v, c \in M^w$ such that both $\varphi(M,b)$ and $ \psi(M,c)$ are infinite and $\varphi(M,b) \times \psi(M,c) \subseteq E$.
	
		\end{enumerate}
\end{thm}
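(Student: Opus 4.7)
The plan is to show that both parts follow by assembling the major results of the previous sections, with essentially no new ideas required.

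For part (1), I would apply Theorem \ref{thm: distal Zarank} to the formula $E(x;y)$ with $|x|=2$ and $|y|=d$. Two hypotheses need to be verified. First, $E$ admits a distal cell decomposition $\CT$ with $|\CT(S)| = O(|S|^2)$: this is precisely Theorem \ref{thm:cell-dec} applied to $E$, since $|x|=2$ and $\CM$ is an $o$-minimal expansion of a real closed field. Second, $\vc(E) \leq d$: here one has to be slightly careful with the duality, since Fact \ref{fac: vc bound in o-min} bounds the dual VC density. I would apply it to the formula $E^*(y;x)$, whose first group of variables $y$ has length $d$; since $(E^*)^* = E$, Fact \ref{fac: vc bound in o-min} then yields $\vc(E) = \vc((E^*)^*) \leq |y| = d$. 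With both hypotheses satisfied, the bound in part (1) is exactly the conclusion of Theorem \ref{thm: distal Zarank}.

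For part (2), I would apply Theorem \ref{thm: distal BukhMatousek} with $k=2$, taking the formula there to be $E(x_1;x_2;z) := E(x_1,x_2)$ in the obvious way (so that $|x_1|=2$, $|x_2|=d$, and $|z|=0$, or else simply regard $E$ as a parameter-definable relation with empty parameter tuple). The hypotheses are that $\CM$ is distal, which is listed explicitly for $o$-minimal structures in the discussion following Fact \ref{fac: char of distality}, and that $\CM$ eliminates $\exists^\infty$, which is a well-known consequence of the $o$-minimal cell decomposition (each fiber of a definable family in one variable is a finite union of points and intervals whose number of components is uniformly bounded along the family). Theorem \ref{thm: distal BukhMatousek} then produces some $N \in \NN$ and formulas $\psi_1(x_1,y_1)$, $\psi_2(x_2,y_2)$ with the stated property. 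Taking $k' := N$, $\varphi(x,v) := \psi_1(x_1,y_1)$ and $\psi(y,w) := \psi_2(x_2,y_2)$ yields the formulation of part (2).

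The main content was already extracted in the earlier theorems, so there is no substantive obstacle; the only mildly subtle point is keeping straight which side of the duality one is on when invoking the VC-density bound, together with the standard observation that $o$-minimal expansions of fields both are distal and eliminate $\exists^\infty$. Once these bookkeeping points are settled, the combination of Theorems \ref{thm:cell-dec}, \ref{thm: distal Zarank} and \ref{thm: distal BukhMatousek} gives the statement verbatim.
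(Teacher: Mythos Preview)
Your proposal is correct and follows essentially the same approach as the paper: part (1) by combining Theorem \ref{thm:cell-dec} and Fact \ref{fac: vc bound in o-min} (applied to $E^*$) to verify the hypotheses of Theorem \ref{thm: distal Zarank}, and part (2) by invoking Theorem \ref{thm: distal BukhMatousek} together with distality and elimination of $\exists^\infty$ for $o$-minimal structures. Your explicit handling of the duality in the VC-density step is exactly the bookkeeping the paper gestures at.
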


\begin{proof}
(1) Follows by applying Theorem \ref{thm: distal Zarank}. Its assumptions are satisfied for an arbitrary formula $\theta(x,y;z)$ with $|x| = 2$ and $|y| = d$ by Theorem \ref{thm:cell-dec} applied to $\theta'(x; y,z)$ and by Fact \ref {fac: vc bound in o-min} applied to the dual formula $(\theta'')^*(x,z ; y)$.

(2) Follows by Theorem \ref{thm: distal BukhMatousek} as o-minimal theories eliminate the $\exists^\infty$ quantifier.
\end{proof}

\begin{rem}
	Theorem \ref{thm: distal Zarank} could be used to obtain a Zarankiewicz-type bound for definable relations $E \subseteq M^t \times M^d$ in $o$-minimal structures, with $t \in \mathbb{N}$ arbitrary. However, we don't pursue it here since optimal bounds for distal cell decompositions are not known  for $t > 2$.
\end{rem}

\begin{cor}
In the setting of Theorem \ref{thm: everything o-min}, there is a constant $\alpha$ and formulas $\varphi(x,v), \psi(y,w)$ depending only on $\theta$ such that either 	
	$$ |E(P,Q)| \leq \alpha \left( m^{\frac{d}{2d-1}} n^{\frac{2d-2}{2d-1}} + m + n \right)$$
 for all finite $P \subseteq M^2, Q \subseteq M^d$ with $|P|=m, |Q| = n$, or there are some  $b \in M^v, c \in M^w$ such that both $\varphi(M,b)$ and $ \psi(M,c)$ are infinite and $\varphi(M,b) \times \psi(M,c) \subseteq E$.

\end{cor}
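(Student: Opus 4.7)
The plan is to derive the corollary as a direct dichotomy from the two parts of Theorem \ref{thm: everything o-min}, by using part (2) to select a single sufficient value of $k$ and then feeding it into part (1). First, I would invoke part (2) to extract the integer $k' \in \mathbb{N}$ and the formulas $\varphi(x,v)$, $\psi(y,w)$, all depending only on $E$; these will serve as the formulas named in the statement of the corollary. Then, with $k'$ fixed, I would apply part (1) with the specific choice $k := k'$ to obtain the constant $c := c(E, k')$ that will appear in the first alternative. Since $k'$ depends only on $E$, so does this constant.

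It then remains to verify the dichotomy. Suppose first that there exist $b \in M^{v}$ and $c \in M^{w}$ such that both $\varphi(M,b)$ and $\psi(M,c)$ are infinite and $\varphi(M,b) \times \psi(M,c) \subseteq E$; then the second alternative of the corollary holds and we are done. Otherwise no such $b, c$ exist, and the contrapositive of part (2) of Theorem \ref{thm: everything o-min} forces $E$ to contain no copy of $K_{k',k'}$. Consequently, for every finite $P \subseteq M^{2}$ and $Q \subseteq M^{d}$, the subset $E(P,Q) \subseteq E$ is likewise $K_{k',k'}$-free, so part (1) applies uniformly with this one fixed $k'$ and yields
$$ |E(P,Q)| \leq c \left( m^{\frac{d}{2d-1}} n^{\frac{2d-2}{2d-1}} + m + n \right). $$

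I do not expect any serious obstacle: the corollary is essentially a bookkeeping step on top of Theorem \ref{thm: everything o-min}. The only point deserving care is the order of quantifier extraction --- one must first choose $k'$ via part (2), and only afterwards extract the constant $c = c(E, k')$ from part (1) --- so that the resulting data $c, \varphi, \psi$ depend only on $E$, exactly as the corollary demands.
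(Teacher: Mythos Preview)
Your proposal is correct and matches the paper's own proof essentially verbatim: the paper also first extracts $k'$, $\varphi$, $\psi$ from part (2) of Theorem \ref{thm: everything o-min} and then takes $c := c(E,k')$ from part (1), noting that the dichotomy is immediate.
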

\begin{proof}
Immediate combining (1) and (2) in Theorem \ref{thm: everything o-min} (let $k'$, $\varphi$, $\psi$ be as given by (2) for $\theta(x,y;z)$, and let $\alpha$ be as given by (1) for this $k'$).
\end{proof}

\begin{rem}
The special case with $d=2$ and $E$ satisfying an additional assumption of $1$-dimensionality of its fibers was obtained independently by Basu and Raz \cite{basu2016minimal} using different methods.
\end{rem}

\bibliographystyle{acm}
\bibliography{refs}

\end{document}